\tikzstyle{arrow}=[draw=black,arrows=-latex]
\newtheorem{theorem}{Theorem}[section]
\newtheorem{lemma}[theorem]{Lemma}
\theoremstyle{definition}
\newtheorem{definition}[theorem]{Definition}
\newcounter{smalllist}
\DeclareMathOperator*{\sgn}{sgn}
\numberwithin{equation}{section}
\newcommand{\abs}[1]{\left\lvert#1\right\rvert}
\newcommand{\norm}[1]{\left\|#1\right\|}
\newcommand{\lb}{\label}
\newcommand{\beq}{\begin{equation}}
\newcommand{\eeq}{\end{equation}}
\newcommand{\bal}{\begin{align}}
\newcommand{\eal}{\end{align}}
\newcommand{\bals}{\begin{align*}}
\newcommand{\eals}{\end{align*}}
\newcommand{\bbR}{{\mathbb{R}}}
\newcommand{\bbZ}{{\mathbb{Z}}}
\newcommand{\bbT}{{\mathbb{T}}}
\newcommand{\bbS}{{\mathbb{S}}}
\begin{document}
\title[Absence of  Splash-like Singularities for g-SQG Patches]
{An Improved Regularity Criterion and \\ Absence of  Splash-like Singularities for
g-SQG Patches}

\author{Junekey Jeon and Andrej Zlato\v{s}}

\address{\noindent Department of Mathematics \\ University of
California San Diego \\ La Jolla, CA 92093 \newline Email: \tt
zlatos@ucsd.edu,
j6jeon@ucsd.edu}



\begin{abstract} 
We prove that splash-like singularities cannot occur for sufficiently regular patch solutions to the generalized surface quasi-geostrophic equation on the plane or half-plane with parameter $\alpha\le \frac 14$.  This includes potential touches of more than two patch boundary segments in the same location, an eventuality that has not been excluded previously 
and presents nontrivial complications  
(in fact, if we do a priori exclude it, then our results extend to all $\alpha\in(0,1)$).  As a corollary, we obtain an improved global regularity criterion for $H^3$ patch solutions when $\alpha\le\frac 14$, namely that finite time singularities cannot occur while the $H^3$ norms of  patch boundaries remain bounded.
\end{abstract}

\maketitle

\section{Introduction} \lb{S1}

The \emph{g-SQG (generalized surface quasi-geostrophic) equation}, is the active scalar PDE
\begin{equation}\label{1.1}
    \partial_{t} \omega + u \cdot \nabla\omega = 0,
\end{equation}
where the scalar $\omega\colon \bbR^{2} \times (0,\infty) \to \bbR$ is advected by the velocity field 
\beq\lb{1.1a}
u\coloneqq \nabla^{\perp}(-\Delta)^{-1+\alpha}\omega.
\eeq
Here $\nabla^{\perp}\coloneqq (-\partial_{x_{2}},\partial_{x_{1}})$ and $\alpha\in\left(0,1\right)$ is a given parameter.  Note that \eqref{1.1} is the vorticity form of the (incompressible) 2D Euler equation when $\alpha=0$, which models the motion of ideal fluids,  with  $u$ the fluid velocity and $\omega:= \nabla^\perp\cdot u$ its vorticity.  When $\alpha=\frac 12$, it is the SQG equation,
which is used in atmospheric science models \cite{Ped} and was first analyzed rigorously by Constantin, Majda, and Tabak \cite{CMT}.  The g-SQG equation with $\alpha\in(0,1)$ is its generalization
and has also been studied in both geophysical and mathematical  literature, including in \cite{PHS, Smith, CIW, CCCGW, KisYaoZla, KRYZ, Gancedo, CorFonManRod, KisLuo}.

Global regularity for (smooth or bounded) solutions has been known  in the Euler case $\alpha=0$ since the works of H\" older \cite{Holder}, Wolibner \cite{Wolibner}, and Yudovich \cite{Yudth}, 
but it is still an open problem for g-SQG with any $\alpha\in(0,1)$.
In this work we consider so-called {\it patch solutions}  to \eqref{1.1}, that is, weak solutions that are linear combinations of characteristic functions of some time-dependent sets $\Omega_n(t)\subseteq\bbR^2$ (often only a single such set/patch  is considered but the extension to multiple sets is typically straightforward).  The main question now is that of global well-posedness for these solutions:  if the boundary of each initial patch $\Omega_n(0)$ is a simple closed curve of some prescribed regularity ($H^k$ or $C^{k,\gamma}$) and these curves are pairwise disjoint, does this setup persist forever or may it cease existing in finite time.  This of course involves not only the required regularity of each $\partial \Omega_n(t)$, but also that they all remain pairwise disjoint simple closed curves.  

Chemin showed the answer to be in the affirmative when $\alpha=0$ \cite{Chemin}, but the question remains open for any $\alpha\in(0,1)$.  Local existence for these models was proved for $\alpha\in(0,\frac 12]$ and $H^3$ patches by Gancedo, who also obtained uniqueness for $\alpha\in(0,\frac 12)$ and those solutions that satisfy a related {\it contour equation}  \cite{Gancedo} (well-posedness for $\alpha=\frac 12$ in a special class of patches was earlier proved by Rodrigo \cite{Rodrigo}).  Local existence was also proved  for $\alpha\in[\frac 12,1)$ and $H^4$ patches by Chae, Constantin, C\' ordoba, Gancedo, and Wu \cite{CCCGW}.
Kiselev, Yao, and Zlato\v s later proved full local well-posedness  for $\alpha\in(0,\frac 12)$ and $H^3$ patches \cite{KisYaoZla} (they also considered the related half-plane case, in which global well-posedness was proved to fail by Kiselev, Yao, Ryzhik, and Zlato\v s \cite{KRYZ}).  In addition,  C\' ordoba, C\' ordoba, and Gancedo achieved this for $\alpha=\frac 12$ and $H^3$ patches  \cite{CorCorGan},  Gancedo and Patel for $\alpha\in(0,\frac 12)$ and $H^2$ patches as well as for $\alpha\in(\frac 12,1)$ and $H^3$ patches, and Gancedo, Nguyen, and Patel for $\alpha=\frac 12$ and $H^{2+\gamma}$ patches  \cite{GanNguPat}.

The singularity-formation mechanism on the half-plane from \cite{KRYZ}, which was motivated by numerical simulations for the 3D Euler equation due to Luo and Hou \cite{LuoHou, LuoHou2}, and by the related proof of double-exponential growth of gradients for smooth solutions to the 2D Euler equation on a bounded domain by Kiselev and \v Sver\' ak \cite{KS}, does not seem to extend to the whole plane case.  It is therefore still unknown whether global well-posedness holds on $\bbR^2$ for any $\alpha\in(0,1)$. Nevertheless, the local well-posedness result in \cite{KisYaoZla} does show that, at least for $\alpha\in(0,\frac 12)$ and $H^3$ patches, finite time singularity can only occur if either a patch boundary loses $H^3$ regularity or a touch happens.  The latter might involve two or more patch boundary segments, which might belong to different patches or to a single patch.

The main result of this paper is that for $\alpha\in(0,\frac 14]$,  a touch cannot occur without the loss of $C^{1,\frac{2\alpha}{1-2\alpha}}$ (and hence also $H^3$) regularity of a patch boundary at the same time (this is also suggested by numerical simulations of C\' ordoba, Fontelos, Mancho, and Rodrigo \cite{CorFonManRod}).
If it did occur and the $C^{1,\gamma}$ norm of the patch boundary would stay uniformly bounded for some $\gamma>0$, the resulting singularity would be called a {\it splash}.  One might think that its existence for the free boundary Euler equation, demonstrated by Castro, C\'ordoba, Fefferman, Gancedo, and G\'omez-Serrano \cite{CCFGG}, and Coutand and Shkoller   \cite{CouShk}, would suggest its possibility for g-SQG patches as well.  But these two cases are very different: the converging boundary segments are separated by vacuum in the free boundary case, while for \eqref{1.1} they are  separated by the (incompressible) fluid medium, which must be ``squeezed out'' of the region between them before a touch can occur.  

One might also think that impossibility of general splash singularities was already proved by Gancedo and Strain for the SQG case $\alpha=\frac 12$ and smooth patches  \cite{GanStr}, who showed that a touch of two patch boundary segments (which we call a {\it simple splash})  is indeed impossible at any specific location without a loss of boundary smoothness   (their argument extends to all $\alpha\in(0,\frac 12)$).  However, they proved this assuming that no singularity occurs elsewhere, and the result also does not exclude simultaneous touches of three or more boundary segments.  Crucially, their proof does not extend to this case either.  In it, they place the two segments in a coordinate system in which both are close to horizontal, and use the fact that normal vectors at two points that minimize the vertical distance of the two boundary segments (at any given time) are automatically parallel. This causes important cancellations in the integral evaluating the approach velocity of the two points, which bound this velocity by a multiple of the product of the distance of the two points and the log of this distance. Gr\" onwall's inequality then yields at most double-exponential-in-time approach rate of the two segments.

We can even obtain a simple exponential bound for $C^{2,\gamma}$ patches with $\gamma>0$ by instead minimizing the distance (rather than vertical distance) of the two boundary segments, in which case the normals at the closest points both lie on the line connecting these points. The resulting computation then bounds the approach velocity by only a multiple of the distance, and it even extends to all $\alpha\in(0,1)$ with appropriate $\gamma$ (see Subsection \ref{S2.5} below). 

However, when a third boundary segment is present nearby, its normal vector at the point where it intersects  the above line 
need not lie on that line, which significantly compromises the  cancellations involved.  One then needs to obtain very precise bounds on the resulting errors in this case, which we will achieve by using the uniform $C^{1,\frac{2\alpha}{1-2\alpha}}$ bound on the patch boundary to estimate the angle between this normal vector and the line, in terms of the distance of the third segment from the two closest points on the first two segments.  When this distance is small, the error will be controlled because the angle must be small;  this control worsens when the distance is larger, but then the effect of the third segment on the two points decreases as well.  This will yield the needed bound on the approach velocity of the closest points, and this estimate will even extend to the case of arbitrarily many boundary segments folded on top of each other and attempting to create a {\it complex splash} singularity.

As a result, we will obtain an improved regularity criterion for $H^3$ patch solutions to \eqref{1.1}, requiring only a uniform bound on the $C^{1,\frac{2\alpha}{1-2\alpha}}$ norm of the patch boundaries.  Nevertheless, this approach only works when $\alpha\in(0,\frac 14]$, and the obtained estimates are insufficient for larger $\alpha$ (specifically, Lemma \ref{L2.5} below).  The reason for this is not just technical, and simply assuming higher boundary regularity will not suffice to overcome the new complications involved.  We believe that a different (dynamical) approach will be needed for $\alpha>\frac 14$ (if the result extends to this range at all), which likely makes it a very difficult problem.

Let us now state rigorously the definition of patch solutions to \eqref{1.1} from \cite{KisYaoZla} (which even allows patches to be nested), and our main result.  Below we let $\mathbb{T}\coloneqq \bbR/2\pi\bbZ$.

\begin{definition} \lb{D1.1}
    Let $\Omega\subseteq\bbR^{2}$ be a bounded open set whose boundary $\partial\Omega$ is a simple closed $C^{1}$ curve with arc-length $\abs{\partial\Omega}$. We call \emph{a constant-speed parameterization of $\partial\Omega$} any counterclockwise parameterization $z\colon\mathbb{T}\to \bbR^{2}$ of $\partial\Omega$ with $\abs{z'}\equiv\frac{\abs{\partial\Omega}}{2\pi}$ on $\mathbb{T}$ (these are all translations of each other), and we define $\norm{\Omega}_{C^{k,\gamma}}\coloneqq \norm{z}_{C^{k,\gamma}(\bbT)}$  and $\norm{\Omega}_{H^{k}}\coloneqq\norm{z}_{H^{k}}$ for $(k,\gamma)\in\mathbb{N}_{0}\times[0,1]$.
\end{definition}

Next we note that when $\alpha\in(0,\frac 12)$,  the velocity $u$ from \eqref{1.1a} satisfies the explicit formula
\begin{equation}\label{1.2}
    u(x,t) \coloneqq  c_\alpha \int_{\bbR^{2}} \frac{(x-y)^{\perp}}{\abs{x-y}^{2+2\alpha}} \, \omega(y,t) \,dy
\end{equation}
for bounded $\omega$,
with $v^{\perp}\coloneqq (-v_{2},v_{1})$ and $c_\alpha>0$ an appropriate constant (see Subsection~\ref{S2.5} below for the necessary adjustments when $\alpha\in[\frac 12,1)$).
For any $\Gamma\subseteq\mathbb{R}^{2}$, vector field $v\colon\Gamma\to\mathbb{R}^{2}$, and $h\in\mathbb{R}$, we let 
the set to which $\Gamma$ is advected by $v$ in time $h$ be
\[
    X_{v}^{h}[\Gamma]\coloneqq \left\{x+hv(x) \,\big|\, x\in\Gamma\right\}.
\]

\begin{definition} \lb{D1.2}
Let $\theta_1,\dots,\theta_N\in\bbR\setminus\{0\}$, and for each $t\in[0,T)$, let $\Omega_1(t),\dots,\Omega_N(t)\subseteq \bbR^2$ be bounded open sets whose boundaries 
are pairwise disjoint simple closed curves such that each $\partial \Omega_n(t)$ is also continuous in $t\in[0,T)$ with respect to Hausdorff distance $d_H$ of sets.  Denote $\partial\Omega(t):=\bigcup_{n=1}^N \partial\Omega_n(t)$ and $\norm{\Omega(t)}_{Y}:=\sum_{n=1}^N \norm{\Omega_n(t)}_{Y}$ for $Y\in\{C^{k,\gamma}, H^k\}$, and let
\begin{equation} \label{1.55}
\omega(\cdot,t) := \sum_{n=1}^N \theta_n \chi_{\Omega_n(t)}.
\end{equation}
    If  for each $t\in(0,T)$ we have
    \begin{equation} \lb{1.3a}
        \lim_{h\to 0} \frac{d_{H}\left( \partial \Omega(t+h),
        X_{u(\cdot,t)}^{h}[\partial\Omega(t)]\right)}{h} = 0,
    \end{equation}
    with $u$ from \eqref{1.2}, then $\omega$ is a \emph{a patch solution} to \eqref{1.1}-\eqref{1.1a} on the time interval $[0,T)$. If we also have $\sup_{t\in[0,T']}\norm{\Omega(t)}_{Y}<\infty$ for some $Y\in\{C^{k,\gamma}, H^k\}$ and each $T'\in(0,T)$, then $\omega$ is a \emph{$Y$ patch solution}  to \eqref{1.1}-\eqref{1.1a} on $[0,T)$.
\end{definition}

While \eqref{1.3a} is stated for each single time $t$ (akin to the definition of strong or classical solutions to a PDE), it agrees with the usual flow-map based definition of solutions to the 2D Euler equation (see the remarks after Definition 1.2 in  \cite{KisYaoZla}).  Since $u$ is only H\" older continuous at the patch boundaries when $\alpha>0$ (and hence the flow map may not be unique), this definition is more appropriate in the g-SQG case.  
    
Theorem 1.5 in \cite{KisYaoZla} shows  that for any $\theta_1,\dots,\theta_N\in\bbR\setminus\{0\}$ and any bounded open sets $\Omega_1(0),\dots,\Omega_N(0)\subseteq \bbR^2$ 
whose boundaries are pairwise disjoint simple closed $H^3$ curves,
there is a time $T\in(0,\infty]$ such that a unique $H^3$ patch solution $\omega= \sum_{n=1}^N \theta_n \chi_{\Omega_n(\cdot)}$ to \eqref{1.1}-\eqref{1.1a} 
exists on $[0,T)$.  And if the maximal such $T$ is finite, then either $\sup_{t\in[0,T)}\norm{\Omega(t)}_{H^{3}}=\infty$ or
\beq\lb{1.3b}
    \sup_{t\in[0,T)} \,\sup_{(n,\xi),(j,\eta)\in \{1,\dots,N\} \times \bbT \,\&\, (n,\xi)\neq(j,\eta)} \frac{|n-j| + \abs{\xi-\eta}}{\abs{z_n(\xi,t) - z_j(\eta,t)}} = \infty,
\eeq
where $z_n(\cdot, t)$ is a constant-speed parametrization of $\partial\Omega_n(t)$ and $|\xi-\eta|$ is  distance on $\bbT$.
Note that if \eqref{1.3b} holds with $n=j$, which means that the {\it arc-chord ratio} for some $\Omega_n(\cdot)$ becomes unbounded as $t\to T$, then this must be realized by a touch of ``distinct'' segments (which we call {\it folds}) of $\partial\Omega_n(\cdot)$ whenever $\sup_{t\in[0,T)}\norm{\Omega(t)}_{C^{1,\gamma}}<\infty$ for some $\gamma>0$.  Indeed,  since $|\partial_\xi z_n(\xi,t)|$ is then uniformly-in-$(n,\xi,t)$ bounded below by a positive constant (see \eqref{2.1a}), it follows that 
the fraction in \eqref{1.3b} is uniformly bounded above when $n=j$ and $|\xi-\eta|$ is small enough. 
Therefore \eqref{1.3b} is the correct definition of a {\it splash-like singularity}   at time $T$ (i.e., a touch of either boundaries of distinct patches or folds of the same patch boundary,  including both simple and complex splashes) when $\sup_{t\in[0,T)}\norm{\Omega(t)}_{C^{1,\gamma}}<\infty$ for some $\gamma>0$.

The following theorem is now our main result.

%

\begin{theorem}\lb{T1.3}
    If $\alpha\in(0,\frac{1}{4}]$ and a $C^{1,\frac{2\alpha}{1-2\alpha}}$ patch solution to \eqref{1.1}-\eqref{1.1a} 
    on the time interval $[0,T)$ with $T<\infty$ satisfies $\sup_{t\in[0,T)}\norm{\Omega(t)}_{C^{1,\frac{2\alpha}{1-2\alpha}}}<\infty$, then \eqref{1.3b} fails   (so no splash-like singularity can occur).  In particular, if  the maximal time $T$ of existence of an $H^3$ patch solution from Theorem~1.5 in \cite{KisYaoZla} is finite (and $\alpha\in(0,\frac{1}{4}]$), then $\sup_{t\in[0,T)}\norm{\Omega(t)}_{H^{3}}=\infty$.    
\end{theorem}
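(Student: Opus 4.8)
The plan is to prove directly that the supremum in \eqref{1.3b} is finite; the $H^3$ assertion then follows from the blow-up dichotomy of \cite{KisYaoZla} by a short embedding argument. Throughout we use that the hypothesis $\sup_{t\in[0,T)}\norm{\Om(t)}_{C^{1,\gamma}}<\infty$ with $\gamma=\frac{2\al}{1-2\al}$ yields, via \eqref{2.1a}, a uniform-in-$(n,\xi,t)$ lower bound $\abs{\partial_\xi z_n(\xi,t)}\ge c_0>0$, so each $\partial\Om_n(t)$ is uniformly bi-Lipschitz near its diagonal. Consequently the fraction in \eqref{1.3b} is uniformly bounded whenever $n=j$ and $\abs{\xi-\eta}$ is below some fixed $\eps_0>0$, and it suffices to bound below the minimal distance between \emph{genuinely distinct} boundary segments,
\beq
\del(t):=\min\bigl\{\,\abs{z_n(\xi,t)-z_j(\eta,t)}\ :\ \abs{n-j}+\abs{\xi-\eta}\ge \eps_0\,\bigr\},
\eeq
the minimum being attained by compactness and continuity. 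Since the numerator $\abs{n-j}+\abs{\xi-\eta}$ is bounded above, \eqref{1.3b} fails as soon as $\inf_{t\in[0,T)}\del(t)>0$, and our goal is the Gronwall-type lower bound $\del(t)\ge\del(0)e^{-Ct}$.

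\emph{Differential inequality.} Fix $t$ and let $x=z_n(\xi,t)$, $y=z_j(\eta,t)$ realize $\del(t)$. First-order optimality in $\xi$ and $\eta$ forces both boundary tangents at $x$ and $y$ to be orthogonal to $x-y$; equivalently $e:=\frac{x-y}{\abs{x-y}}$ is the common normal direction at the two closest points. Because boundary points are advected by $u$, the pair evolves with velocities $u(x,t)$ and $u(y,t)$, and a standard envelope (Danskin) argument shows that $\del$ is Lipschitz in $t$ with right derivative equal to the minimum over minimizing pairs of the approach speed $\bigl(u(x,t)-u(y,t)\bigr)\cdot e$. It therefore suffices to prove the pointwise velocity bound
\beq
\bigl|\bigl(u(x,t)-u(y,t)\bigr)\cdot e\bigr|\ \le\ C\,\del(t)
\eeq
at any minimizing pair, with $C$ depending only on $\al$, $\eps_0$ and the $C^{1,\gamma}$ bound.

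\emph{The velocity estimate (main obstacle).} This is the crux of the argument and the sole place where $\al\le\frac14$ is used. Since $\al<\frac12$, the velocity is given by \eqref{1.2}, and we write $\bigl(u(x)-u(y)\bigr)\cdot e$ as a single integral of the difference of the two kernels against $\omega=\sum_n\theta_n\chi_{\Om_n}$. We split $\partial\Om(t)$ into the two \emph{principal} segments passing through $x$ and $y$ and the remaining \emph{folded} segments lying near the chord $[x,y]$. On the principal segments the normals at $x$ and $y$ are exactly parallel to $e$, which produces the cancellations exploited by Gancedo--Strain \cite{GanStr}; minimizing the true distance rather than a vertical distance (cf.\ Subsection~\ref{S2.5}) makes these cancellations strong enough that the principal contribution is $O(\del)$. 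The difficulty is a third (or further) segment crossing near $[x,y]$: at its point nearest the chord its normal need \emph{not} be parallel to $e$, destroying the cancellation. We control the resulting error with the uniform $C^{1,\gamma}$ bound, which forces that normal to deviate from $e$ by an angle $O(r^{\gamma})$, where $r$ is the segment's distance from $\{x,y\}$. This smallness competes against the growth of the singular kernel as $r\to0$, and with $\gamma=\frac{2\al}{1-2\al}$ the two effects balance precisely at $\al=\frac14$: the per-segment error is integrable and summable over arbitrarily many folds, yielding the needed $O(\del)$ bound even for a putative complex splash. For $\al>\frac14$ this balance breaks irreparably---the technical manifestation being Lemma~\ref{L2.5}---so the method is confined to $\al\le\frac14$. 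Establishing these error estimates rigorously, uniformly in the number and geometry of the folded segments, is the main obstacle.

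\emph{Conclusion and corollary.} With $\del'_+(t)\ge -C\del(t)$, Gronwall gives $\del(t)\ge\del(0)e^{-Ct}>0$ for all $t\in[0,T)$; as $T<\infty$ this bounds $\del$ below, so \eqref{1.3b} fails. For the final assertion, Theorem~1.5 of \cite{KisYaoZla} states that if the maximal existence time $T$ of the $H^3$ solution is finite then either $\sup_{t}\norm{\Om(t)}_{H^3}=\infty$ or \eqref{1.3b} holds. But $H^3(\bbT)\hookrightarrow C^{2,1/2}(\bbT)\subseteq C^{1,1}\subseteq C^{1,\gamma}$ precisely because $\gamma=\frac{2\al}{1-2\al}\le1$ for $\al\le\frac14$, so a finite $\sup_t\norm{\Om(t)}_{H^3}$ would force a finite $\sup_t\norm{\Om(t)}_{C^{1,\gamma}}$ and hence, by the first part, the failure of \eqref{1.3b}. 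Thus \eqref{1.3b} cannot hold, and the dichotomy leaves only $\sup_{t\in[0,T)}\norm{\Om(t)}_{H^{3}}=\infty$.
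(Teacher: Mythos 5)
Your outline of the reduction is sound and matches the paper's: pass to the minimal distance $m(t)$ between parameter pairs that are genuinely far apart, use the Constantin--Escher envelope argument to get a Lipschitz quantity with derivative equal to the approach speed at a minimizing pair, note that the tangents at the two closest points are orthogonal to the chord, and close with Gr\"onwall; the $H^3$ corollary via $H^3\hookrightarrow C^{2,1/2}\subseteq C^{1,\gamma}$ is also fine. But the proposal has a genuine gap at exactly the place you yourself flag as ``the main obstacle'': the velocity estimate $\abs{(u(x,t)-u(y,t))\cdot e}\le C\,\del(t)$ is asserted and motivated, not proved. All of the actual content of the paper's argument lives there --- the graph representation of nearby folds (Lemmas~\ref{L2.1}--\ref{L2.3}), the decomposition of the kernel integral over the strips between consecutive folds, the oddness-in-$h$ symmetrization that removes the leading-order contribution, and the two integral estimates (Lemmas~\ref{L2.4} and \ref{L2.5}) that control the remainder. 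A proof that defers precisely this step has not proved the theorem.

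Moreover, the one quantitative ingredient you do commit to is wrong. You claim that the uniform $C^{1,\gamma}$ bound forces the normal of a nearby fold to deviate from $e$ by an angle $O(r^{\gamma})$, where $r$ is its distance from the closest points. From a $C^{1,\gamma}$ bound together with non-crossing of boundary segments one can only deduce an angle of order $r^{\gamma/(1+\gamma)}$ (this is the paper's Lemma~\ref{L2.2}, proved by trapping the two arcs in thin rectangles and deriving a contradiction if the slopes differ too much); the exponent $\gamma$ is not attainable. This is not a cosmetic error: the fold at distance $a$ contributes an error of order $a^{\beta-2\alpha}m$ when the angle bound is $O(a^{\beta})$, so one needs $\beta\ge 2\alpha$. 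With the correct $\beta=\frac{\gamma}{1+\gamma}$ and the constraint $\gamma\le 1$ this forces $2\alpha\le\frac12$, i.e.\ $\alpha\le\frac14$, which is exactly where the hypothesis of the theorem comes from. With your claimed $\beta=\gamma$ the argument would go through for all $\alpha<\frac12$ by taking $\gamma=2\alpha$, contradicting the paper's (and your own) statement that the method breaks at $\alpha=\frac14$ via Lemma~\ref{L2.5}. So the mechanism you describe does not produce the balance you invoke, and the estimate underlying the whole proof remains unestablished.
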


{\it Remarks.} 
1.  Our proof shows that the left-hand side of \eqref{1.3b} with $\sup_{t\in[0,T)}$ removed can grow at most exponentially in time (up to time $T$) if $\sup_{t\in[0,T)}\norm{\Omega(t)}_{C^{1,\frac{2\alpha}{1-2\alpha}}}<\infty$.  Hence boundaries of distinct patches, as well as folds of the same patch boundary, can only approach each other exponentially quickly in this case.
\smallskip 

2.  While we do not know whether this result extends to some $\alpha>\frac 14$,  in Subsection~\ref{S2.5} below we provide an extension to all $\alpha\in\left(0,1 \right)$ when one a priori requires that only simple splashes can occur (i.e., no more than two segments of $\partial\Omega$ are allowed to touch in the same location) and $\sup_{t\in[0,T)}\norm{\Omega(t)}_{C^{k,\gamma}}<\infty$ holds for $k=1$ and some $\gamma\ge 2\alpha$ (when $\alpha\in(0,\frac 12]$), or for $k=2$ and some $\gamma\ge 2\alpha-1$ (when $\alpha\in[\frac 12,1)$).   
The obtained bound on the approach rate of two patches/folds is now double-exponential when $\gamma$ is equal to the minimal value above ($2\alpha$ or $2\alpha-1$) and exponential otherwise.  We note that
when the potential simple splash is assumed to have  a predetermined location and development of singularities elsewhere is a priori excluded,
then this was also proved for $\alpha= \frac 12$ in \cite{GanStr} (for smooth patches and with a double-exponential bound on the approach rate), and  by Kiselev and Luo for all $\alpha\in(0,1)$ in \cite{KisLuo} (this work was done contemporaneously with and independently of ours).\smallskip


Finally, here is an extension to the half-plane (see Section \ref{S3} for the relevant adjustments).

\begin{theorem}\lb{T1.4}
Theorem \ref{T1.3} extends to patch solutions on the half-plane, with the second claim involving $H^3$ patch solutions from \cite[Theorem~1.4]{KisYaoZla} and $\alpha\in(0,\frac 1{24})$,
 or $H^2$ patch solutions from \cite[Theorem~1.1]{GanPat} and $\alpha\in(0,\frac 1{6})$.
\end{theorem}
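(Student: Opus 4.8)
The plan is to reduce the half-plane problem to the whole-plane setting of Theorem~\ref{T1.3} by odd reflection. Write $\mathbb{H}\coloneqq\bbR\times(0,\infty)$ and let $\bar{\,\cdot\,}$ denote reflection across $\partial\mathbb{H}=\{x_2=0\}$, so $\bar y\coloneqq(y_1,-y_2)$. The half-plane velocity enforcing no penetration of $\partial\mathbb{H}$ is
\[
u(x,t) = c_\alpha\int_{\mathbb{H}}\left[\frac{(x-y)^\perp}{\abs{x-y}^{2+2\alpha}} - \frac{(x-\bar y)^\perp}{\abs{x-\bar y}^{2+2\alpha}}\right]\omega(y,t)\,dy,
\]
which is exactly the whole-plane velocity \eqref{1.2} of the odd extension $\wti\omega$ of $\omega$ (given by $\wti\omega(y)\coloneqq-\omega(\bar y)$ for $y_2<0$); one checks $u_2=0$ on $\partial\mathbb{H}$ since $\abs{x-y}=\abs{x-\bar y}$ there. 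Hence if $\omega=\sum_n\theta_n\chi_{\Omega_n(\cdot)}$ is a half-plane patch solution, then $\wti\omega=\sum_n\theta_n(\chi_{\Omega_n(\cdot)}-\chi_{\bar\Omega_n(\cdot)})$ is a whole-plane patch solution in the sense of Definition~\ref{D1.2} with $2N$ patches, whose boundaries are pairwise disjoint simple closed curves as long as each $\Omega_n(t)$ stays in the open half-plane, and \eqref{1.3a} for the image boundaries follows from that for the real ones by the reflection symmetry of $u$. Since reflection is an isometry, the $C^{1,\gamma}$ norm of the $2N$-patch configuration equals $2\norm{\Omega(t)}_{C^{1,\gamma}}$, so the uniform $C^{1,\frac{2\alpha}{1-2\alpha}}$ bound transfers.

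Next I would note that both half-plane singularity scenarios become whole-plane splash-like singularities for $\wti\omega$: a touch of two real patch boundaries is a touch of two boundaries of $\wti\omega$, while a touch of a real boundary with $\partial\mathbb{H}$ is a touch of that boundary with its own image. In either case the quantity in \eqref{1.3b} for the $2N$-patch configuration blows up at the putative singular time, so Theorem~\ref{T1.3} rules both out for $\alpha\in(0,\frac14]$, which is the first claim. The delicate case is the boundary touch, where the image kernel $\frac{(x-\bar y)^\perp}{\abs{x-\bar y}^{2+2\alpha}}$ is itself singular; but by reflection symmetry the point of a real segment closest to $\partial\mathbb{H}$ and its image lie on a common vertical line with parallel (vertical) normals, so the favorable cancellation exploited in the proof of Theorem~\ref{T1.3} is automatically present, and the remaining real and image segments play the role of the additional segments treated there.

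Finally, for the second claim I would combine this with the half-plane local theory. For $H^3$ patches, \cite[Theorem~1.4]{KisYaoZla} gives local well-posedness and a blow-up criterion for $\alpha\in(0,\frac1{24})$ reducing any finite-time singularity to either $\sup_t\norm{\Omega(t)}_{H^3}=\infty$ or a touch; since $H^3(\bbT)\hookrightarrow C^{2,1/2}(\bbT)\subseteq C^{1,\frac{2\alpha}{1-2\alpha}}(\bbT)$ for $\alpha\le\frac14$, a bounded $H^3$ norm supplies the $C^{1,\frac{2\alpha}{1-2\alpha}}$ bound needed above, excluding the touch and forcing $H^3$ blow-up. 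For $H^2$ patches, \cite[Theorem~1.1]{GanPat} plays the same role; here one only has $H^2(\bbT)\hookrightarrow C^{1,1/2}(\bbT)$, so the requirement $\frac{2\alpha}{1-2\alpha}\le\frac12$, i.e.\ $\alpha\le\frac16$, is what produces that range.

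The main obstacle I anticipate is the boundary-touch case: one must ensure the singular image contributions near $\partial\mathbb{H}$ are governed by the same estimate as in the whole-plane proof. The reflection symmetry makes the leading cancellation work, but the care lies in checking that the extended $2N$-patch solution meets all hypotheses of Theorem~\ref{T1.3} uniformly up to $\partial\mathbb{H}$---in particular the uniform lower bound on $\abs{\partial_\xi z}$ and the applicability of Lemma~\ref{L2.5} to the image segments. A secondary point, worth stating explicitly, is that both $\alpha$-ranges in the second claim are inherited from the external local theories and (for $H^2$) the Sobolev embedding, whereas the splash argument itself continues to function for all $\alpha\in(0,\frac14]$.
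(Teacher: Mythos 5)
There is a genuine gap: your reduction to Theorem \ref{T1.3} via odd reflection does not go through, and the obstruction is exactly the point you defer to the end without resolving. In the half-plane setting the patch boundaries are allowed to touch $\partial D$ --- they do so already at time $0$ in the singular solutions of \cite{KRYZ} and \cite{GanPat} that this theorem is aimed at --- so the extended $2N$-patch configuration $\wti\omega=\sum_n\theta_n(\chi_{\Omega_n}-\chi_{\overline{\Omega}_n})$ is in general \emph{not} a patch solution in the sense of Definition \ref{D1.2}: the boundaries of $\Omega_n$ and $\overline{\Omega}_n$ need not be pairwise disjoint simple closed curves, and the quantity \eqref{1.3b} for the extended configuration can be infinite from the start. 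Hence Theorem \ref{T1.3} cannot be invoked as a black box. Relatedly, your claim that a touch of a real boundary with $\partial D$ is a splash-like singularity which Theorem \ref{T1.3} ``rules out'' is wrong on both counts: such a touch is not part of the half-plane blow-up criterion (the analog of \eqref{1.3b}, i.e.\ \eqref{4.1}, involves only the real patches), and it cannot be ruled out since it may be present from the initial time.

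What the paper does instead is re-run the proof of the key pointwise estimate \eqref{2.4} for the half-plane velocity \eqref{3.1}, with $m(t)$ still defined over the real patches only and with the reflected boundaries entering merely as additional segments $f_i$ in the decomposition of Subsection \ref{S2.3}. The new quantitative difficulty --- which your proposal does not engage with --- is that an image segment can pass at distance $a<m$ from the closest points $z(\xi_t),z(\eta_t)$, and can even contain one of them ($a=0$) when a patch touches $\partial D$; the only structure available is that real and reflected boundaries cannot cross. This is precisely why Lemma \ref{L2.5} was stated with the factor $(a+m)^{-2\alpha}$ rather than $a^{-2\alpha}$ (the paper flags this explicitly in Section \ref{S3}), so that $|J_1|\le Ca^{\frac{\gamma}{1+\gamma}}(a+m)^{-2\alpha}m\le Cm$ survives for $a\in[0,m)$ as well. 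Your ``delicate case'' paragraph gestures at the right picture (image segments as extra nearby segments), but without this modification of the argument the proof is incomplete. A minor further point: the ranges $\alpha\in(0,\tfrac1{24})$ and $\alpha\in(0,\tfrac16)$ are taken directly from the ranges of validity of the local well-posedness results in \cite{KisYaoZla} and \cite{GanPat}; the coincidence with the embedding $H^2(\bbT)\hookrightarrow C^{1,1/2}(\bbT)$ and $\frac{2\alpha}{1-2\alpha}\le\frac12$ is not what the paper uses to fix $\tfrac16$.
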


It was proved in  \cite{KRYZ} that for any $\alpha\in(0,\frac 1{24})$,  there are $H^3$ patch solutions on the half-plane that become singular in finite time.  For $\alpha\in(0,\frac 1{6})$ and $H^2$ patch solutions this was proved in \cite{GanPat}. Theorem \ref{T1.4} shows that this cannot happen only via a splash-like singularity and always involves blow-up of  their $H^3$ resp.~$H^2$ norms.

\medskip

\textbf{Acknowledgements}.
JJ acknowledges partial support by  NSF grant DMS-1900943.  AZ acknowledges partial support by  NSF grant DMS-1900943 and by a Simons Fellowship.  

\section{Proof of Theorem~\ref{T1.3}} \lb{S2}


\subsection{The Single Patch Case} \lb{S2.1}

For the sake of notational simplicity, let us first consider the case of a single patch on which $\omega\equiv 1$, that is, $\omega(\cdot,t)=\chi_{\Omega(t)}$.  Then \eqref{1.2} becomes 
    \begin{equation}\lb{1.4}
        u(x,t) \coloneqq \int_{\Omega(t)} \frac{(x-y)^{\perp}}{\abs{x-y}^{2+2\alpha}} \, dy,
    \end{equation}
after rescaling \eqref{1.1} in time by $c_\alpha$ (which we do in order to remove the constant). 

We will not assume $\alpha\leq\frac{1}{4}$ until it is needed, so that it is clear where this hypothesis enters in our argument.   We will therefore consider a $C^{1,\gamma}$ patch solution with any $\gamma\in(0,1]$ below. 
If now $z(\cdot,t)$ is any constant-speed parameterization of $\partial\Omega(t)$ for $t\in[0,T)$, we assume that
\beq\lb{2.0}
M\coloneqq\sup_{t\in[0,T)}\norm{z(\cdot,t)}_{C^{1,\gamma}}<\infty.
\eeq
We now want to show that this implies
\begin{equation}\lb{2.1}
    \sup_{t\in[0,T)} \,\sup_{\xi,\eta\in\bbT \,\&\, \xi\neq\eta} \frac{\abs{\xi-\eta}}{\abs{z(\xi,t) - z(\eta,t)}} < \infty.
\end{equation}

Since a $C^1$ patch solution is also a weak solution to \eqref{1.1}-\eqref{1.1a} with  $\abs{\Omega(t)}$ being conserved (see Remark 3 after Definition 1.2 in \cite{KisYaoZla}), the isoperimetric inequality shows that 
\beq\lb{2.1a}
M'\coloneqq   \inf_{(\xi,t)\in \bbT\times [0,T)} \abs{\partial_{\xi}z(\xi,t)}>0.
\eeq
Now for any $t\in[0,T)$ and $\xi,\eta\in\bbT$, there are  $\xi_{1},\xi_{2}\in\bbT$ between $\xi$ and $\eta$ such that
\[
    \abs{z(\xi,t) - z(\eta,t)} = |\xi-\eta|\abs{(\partial_{\xi}z_{1}(\xi_{1},t), \partial_{\xi}z_{2}(\xi_{2},t))} 
    \geq |\xi-\eta| \left( \abs{\partial_{\xi}z(\xi ,t)} - 2M |\xi-\eta|^\gamma  \right) .
\]
Hence if we let $\delta\coloneqq(M'/4M)^{1/\gamma}$, then 
\beq\lb{2.2a}
 \abs{z(\xi,t) - z(\eta,t)}\ge \frac{M'}2|\xi-\eta|
 \eeq
  whenever $|\xi-\eta|\le\delta$.  It follows that to conclude \eqref{2.1}, it suffices to show
\begin{equation}\lb{2.2}
    \inf_{t\in[0,T)} \, \min_{\xi,\eta \in \bbT \,\&\, \abs{\xi-\eta}\geq\delta} \abs{z(\xi,t) - z(\eta,t)} >0.
\end{equation}


We therefore let
\beq\lb{2.2b}
    m(t)\coloneqq \min_{\xi,\eta\in\bbT \,\&\, \abs{\xi-\eta}\geq\delta}\abs{z(\xi,t) - z(\eta,t)} \ge 0,
\eeq
and let $\xi_{t},\eta_{t}\in\mathbb{T}$ be such that $ \abs{z(\xi_{t},t) - z(\eta_{t},t)} = m(t)$.   If \eqref{2.2} fails, then clearly for all $t<T$ close enough to $T$ we have $m(t)<\frac{M'}2\delta$, which shows that $\abs{\xi_{t}-\eta_{t}}>\delta$ for these $t$ because \eqref{2.2a} holds.  It suffices to consider only such $t$.
Then, following an argument in \cite{ConsEsch}, one can easily see that $m(t)$ is locally Lipschitz (and so differentiable at almost all such $t$) and we have
\begin{equation}\lb{2.3}
    m'(t) = \frac{z(\xi_{t},t) - z(\eta_{t},t)}{m(t)}
    \cdot (u(z(\xi_{t},t),t) - u(z(\eta_{t},t),t))
\end{equation}
for almost every such $t$. Hence Gr\"{o}nwall's inequality shows that it suffices to prove
\begin{equation}\lb{2.4}
   -(u(z(\xi_{t},t),t) - u(z(\eta_{t},t),t)) \cdot n_t \le C m(t)
\end{equation}
for some $t$-independent $C<\infty $ and all $t$ such that $m(t)\in(0,\frac{M'\delta}2)$, where $n_t\coloneqq \frac{z(\xi_{t},t) - z(\eta_{t},t)}{m(t)}$ is the unit vector in the direction $z(\xi_{t},t) - z(\eta_{t},t)$.  Of course, the definition of $\xi_t,\eta_t$ shows that $n_t$ is also normal to $\partial\Omega(t)$ at both $z(\xi_{t},t)$ and $z(\eta_{t},t)$.


Since \eqref{2.4} only involves quantities at a single time, we will now assume that $t$ is close to $T$ and  drop the dependence of $\Omega,z,u,m$ on $t$ from our  notation.  The above also shows that after a translation and rotation we can assume that:
\begin{enumerate}
    \item $z(\eta_{t}) = (0,0)$ and $z(\xi_{t}) = (0,m)$, with $m \in(0,\frac{M'\delta}2)$;
    \item $\partial_{\xi}z(\xi_{t}), \partial_{\xi}z(\eta_{t})\perp(0,1)$.
\end{enumerate}
We will do so, and then \eqref{2.4} becomes just
\begin{equation}\lb{2.5}
    u_{2}(0,0) - u_{2}(0,m) \le C m.
\end{equation}
We will prove this in the next three subsections.  We note that all constants below may depend on $\alpha,\gamma,M,M'$ (recall that $\delta$ also depends on these), but will be independent of $m,t$.

\subsection{Some Geometric Lemmas} \lb{S2.2}

We first state some geometric lemmas that will be used throughout. The first of these is a trivial consequence of $C^{1,\gamma}$-regularity of $\partial\Omega$, which says that near any $z(\xi)$, the curve $z$ is the graph of some function $f:\bbR\to\bbR$ defined with respect to the coordinate system centered at $z(\xi)$ and with the horizontal axis not too far from $\partial_{\xi}z(\xi)$.

\begin{lemma}\lb{L2.1}
    There are  $A\ge 1$ and $R_{0}>0$ such that for any $\xi\in\mathbb{T}$ and any $v\in\bbS^1$ with $| \partial_{\xi}z(\xi) \cdot v|\ge \frac 12|\partial_{\xi}z(\xi)|$, there is $f\colon[-R_{0},R_{0}]\to\mathbb{R}$ with $\norm{f}_{C^{1,\gamma}}\leq A$ such that
    \[
        \left\{z(\xi) + hv
        + f(h) v^\perp
       \,\,\big|\,\, h\in [-R_{1},R_{1}]\right\}
       = z \left( \left[\xi-\xi_1,\xi+\xi_2 \right] \right)
    \]
for each $R_1\in[0,R_0]$ and some $\xi_1,\xi_2\in[\frac{R_1}{M}, \frac{3R_1}{M'}]$.  Then $f(0)=0$ and $f'(0)=\frac{\partial_{\xi}z(\xi)\cdot v^\perp}{\partial_{\xi}z(\xi)\cdot v}$.
\end{lemma}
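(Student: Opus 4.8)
The plan is to realize the claimed $f$ as the graph obtained by inverting the ``horizontal'' component of $z$ in the frame $(v,v^\perp)$. Fix $\xi$ and $v$, and for small $s$ write
\[
z(\xi+s)-z(\xi)=g(s)\,v+\phi(s)\,v^\perp,\qquad g(s):=(z(\xi+s)-z(\xi))\cdot v,\quad \phi(s):=(z(\xi+s)-z(\xi))\cdot v^\perp,
\]
so that $g(0)=\phi(0)=0$, while $g',\phi'\in C^{0,\gamma}$ with $\norm{g'}_\infty,\norm{\phi'}_\infty\le M$ and $[g']_{C^{0,\gamma}},[\phi']_{C^{0,\gamma}}\le M$ by \eqref{2.0}. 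The first step is to show $g$ is strictly monotone on a fixed-size interval about $0$. Indeed, the hypothesis on $v$ gives $\abs{g'(0)}=\abs{\partial_\xi z(\xi)\cdot v}\ge\f12\abs{\partial_\xi z(\xi)}\ge\f{M'}2$ via \eqref{2.1a}, while $\abs{g'(s)-g'(0)}\le M\abs{s}^\gamma$. Hence, with $s_0:=(M'/6M)^{1/\gamma}$, we get $\f{M'}3\le\abs{g'(s)}\le M$ for all $\abs{s}\le s_0$, so $g$ is a $C^{1,\gamma}$ diffeomorphism from $[-s_0,s_0]$ onto an interval whose interior contains $[-R_0,R_0]$ for $R_0:=\f{M'}3\,s_0$. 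All these quantities depend only on $\alpha,\gamma,M,M'$, which is what makes $R_0$ (and $A$ below) uniform in $\xi,v$.

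Second, I would set $f:=\phi\circ g^{-1}$ on $[-R_0,R_0]$. Then $f(0)=\phi(0)=0$, and by the chain rule $f'(0)=\phi'(0)/g'(0)=\f{\partial_\xi z(\xi)\cdot v^\perp}{\partial_\xi z(\xi)\cdot v}$, as claimed. For the graph identity, given $R_1\in[0,R_0]$ and letting $s=g^{-1}(r)$ range over $g^{-1}([-R_1,R_1])$ as $r$ does, each point $z(\xi)+rv+f(r)v^\perp$ equals $z(\xi)+g(s)v+\phi(s)v^\perp=z(\xi+s)$; hence the graph over $[-R_1,R_1]$ is exactly $z([\xi-\xi_1,\xi+\xi_2])$, where $-\xi_1$ and $\xi_2$ (or $\xi_2$ and $-\xi_1$, depending on the sign of $\partial_\xi z(\xi)\cdot v$, i.e.\ on whether $g$ is increasing or decreasing) are the $g$-preimages of $\mp R_1$. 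The bounds $\xi_1,\xi_2\in[\f{R_1}M,\f{3R_1}{M'}]$ follow at once from $\f{M'}3\le\abs{g'}\le M$, since $R_1=\abs{\int_0^{\pm\xi_i}g'}$.

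Finally, I would verify $\norm{f}_{C^{1,\gamma}}\le A$. The sup bounds are immediate: $\abs{f}\le M s_0$ and $\abs{f'}=\abs{\phi'/g'}\le 3M/M'$. For the Hölder seminorm, write $f'=\psi\circ g^{-1}$ with $\psi:=\phi'/g'$; the quotient rule, together with $\abs{\phi'},\abs{g'}\le M$, $\abs{g'}\ge M'/3$, and $[\phi']_{C^{0,\gamma}},[g']_{C^{0,\gamma}}\le M$, bounds $[\psi]_{C^{0,\gamma}}$ in terms of $M,M'$ only, while $g^{-1}$ is Lipschitz with constant $\le 3/M'$; composing a $C^{0,\gamma}$ function with a Lipschitz one then bounds $[f']_{C^{0,\gamma}}$ in terms of $M,M',\gamma$. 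Taking $A$ to be the maximum of $1$ and these bounds completes the argument.

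I expect no genuine obstacle, as the statement is essentially a bookkeeping consequence of $C^{1,\gamma}$ regularity. The only points requiring care are the \emph{uniformity} of $s_0,R_0,A$ in $\xi$ and $v$ (guaranteed precisely by \eqref{2.0} and \eqref{2.1a}, hence by $M,M'$ alone), the harmless sign case $\partial_\xi z(\xi)\cdot v<0$ handled above, and the mildly non-routine Hölder estimate for the composition $\psi\circ g^{-1}$.
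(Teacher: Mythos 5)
Your proof is correct; the paper itself gives no argument for Lemma \ref{L2.1}, dismissing it as ``a trivial consequence of $C^{1,\gamma}$-regularity,'' and your construction --- inverting the tangential component $g(s)=(z(\xi+s)-z(\xi))\cdot v$ on a uniform interval and setting $f=\phi\circ g^{-1}$ --- is exactly the standard argument the authors evidently have in mind. The points you flag as needing care (uniformity of $s_0,R_0,A$ via $M$ and $M'$ from \eqref{2.0} and \eqref{2.1a}, the sign of $\partial_\xi z(\xi)\cdot v$, and the H\"older bound for $\psi\circ g^{-1}$ with $g^{-1}$ Lipschitz) are indeed the only nontrivial ones, and you handle them correctly.
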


The next lemma shows that when two folds of $\partial \Omega$ are close to each other, the angles between their tangent lines are controlled by their distance.

\begin{lemma}\lb{L2.2}
    There are $B,R>0$ such that for any $\xi,\eta\in\mathbb{T}$ with 
    $\abs{z(\xi)-z(\eta)}\leq R$ we have
    \beq\lb{2.5a}
        \abs{\tan\theta} \leq B\abs{z(\xi)-z(\eta)}^{\frac{\gamma}{1+\gamma}},
    \eeq
 where $\theta$ is the angle between $\partial_{\xi}z(\xi)$ to $\partial_{\xi}z(\eta)$.
\end{lemma}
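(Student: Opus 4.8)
The plan is to work locally near the two points, combining the $C^{1,\gamma}$ graph structure furnished by Lemma~\ref{L2.1} with the fact that $z$ parametrizes a \emph{simple} (non-self-intersecting) curve. After a rigid motion I place $z(\eta)=(0,0)$ with $\partial_\xi z(\eta)$ horizontal, and I write $d:=|z(\xi)-z(\eta)|$ and $z(\xi)=(a,b)$, so that $a^2+b^2=d^2$ with $|a|,|b|\le d$. Since $|\tan\theta|$ depends only on the two tangent \emph{lines}, it suffices to bound $|\tan\bar\theta|$, where $\bar\theta\in[0,\tfrac{\pi}{2}]$ is the acute angle between them. By Lemma~\ref{L2.1} (with $v=e_1$) the sub-arc near $z(\eta)$ is a graph $y=f(x)$ over $[-R_0,R_0]$ with $f(0)=f'(0)=0$ and $\norm{f}_{C^{1,\gamma}}\le A$, hence $|f'(x)|\le A|x|^\gamma$ and $|f(x)|\le A|x|^{1+\gamma}$; this is the nearly flat \emph{reference arc} $\Gamma_\eta$.

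I first dispose of the case in which $z(\xi)$ lies on this same local arc: then $b=f(a)$ and the tangent there is $(1,f'(a))$, so $|\tan\theta|=|f'(a)|\le A|a|^\gamma\le Ad^\gamma\le Ad^{\gamma/(1+\gamma)}$, which is stronger than claimed. Otherwise $z(\xi)$ lies on a distinct sub-arc $\Gamma_\xi$ (a \emph{fold}) which, by injectivity of $z$, is disjoint from $\Gamma_\eta$. In this fold case I next claim that for $R$ small the acute angle satisfies $\bar\theta<\tfrac{\pi}{3}$, so that $\Gamma_\xi$ is \emph{also} a graph $y=g(x)$ over the horizontal axis near $a$, with $g(a)=b$, $|g'(a)|=|\tan\theta|$, and $\norm{g}_{C^{1,\gamma}}\le A$. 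Indeed, if $\bar\theta\ge\tfrac{\pi}{3}$, then by Lemma~\ref{L2.1} (with $v$ the near-vertical direction) $\Gamma_\xi$ is a nearly vertical graph spanning a fixed vertical extent about $z(\xi)$; since $|b|\le d$ is small, such an arc is forced to cross the nearly horizontal arc $\Gamma_\eta$ inside a suitable small rectangle, contradicting $\Gamma_\xi\cap\Gamma_\eta=\emptyset$.

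With both arcs now graphs over the horizontal axis on a common interval $I\ni a$, disjointness forces $\psi:=g-f$ to have constant sign on $I$. The heart of the estimate is a one-sided mean-value comparison: for a length $L>0$, moving from $a$ in the appropriate horizontal direction (dictated by the signs of $\psi$ and $g'(a)$), the Hölder bound $|g'(s)-g'(a)|\le A|s-a|^\gamma$ shows that there the value of $g$ differs from $b$ by at least $L|\tan\theta|-AL^{1+\gamma}$ in the direction away from $f$, whereas $|f|\le A(d+L)^{1+\gamma}$ remains small; since $\psi$ cannot change sign, this forces $L|\tan\theta|\le |b|+AL^{1+\gamma}+A(d+L)^{1+\gamma}$. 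Using $|b|\le d$ and optimizing over $L$ (the choice $L\sim d^{1/(1+\gamma)}$ balances the two error terms against $d/L$) yields $|\tan\theta|\le C(\tfrac dL+L^\gamma)\le B\,d^{\gamma/(1+\gamma)}$, which is the claimed bound; this $L$ satisfies $L\le R_0$ once $R$ is taken small, so all the graph representations are admissible.

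I expect the main obstacle to be the exclusion of near-perpendicular tangents, i.e.\ making the crossing argument rigorous: one must choose the dimensions of the rectangle (balancing the fixed vertical extent of $\Gamma_\xi$ against the smallness of $b$, of $f$, and of the horizontal wandering of $\Gamma_\xi$) so that $\Gamma_\eta$ genuinely spans the left–right edges while $\Gamma_\xi$ spans the bottom–top edges, and then invoke a Jordan-type intersection statement. This is the only place where simplicity of the curve, rather than mere $C^{1,\gamma}$ regularity, enters; everything else — the graph representation from Lemma~\ref{L2.1}, the Hölder mean-value step, and the optimization in $L$ — is routine.
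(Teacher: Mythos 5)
Your argument is viable and its quantitative core is genuinely different from the paper's, but as written it has one unacknowledged gap in the case analysis in addition to the one you flag. First, the comparison. The paper proves the bound (for $\abs{\sin\theta}$, then converts to $\tan$) by a single topological argument uniform in the angle: it surrounds $z(\eta)$ with a thin rectangle $Q$ of half-length $\sim r^{1/(1+\gamma)}$ along $\partial_\xi z(\eta)$ and half-height $\sim r$, surrounds $z(\xi)$ with a congruent rectangle $Q'$ tilted along $\partial_\xi z(\xi)$, uses Lemma~\ref{L2.1} to make each local arc cross its own rectangle between the short sides, and observes that if $\abs{\sin\theta}$ exceeds the claimed bound then the short sides of $Q'$ land on opposite sides of the horizontal strip containing the first arc while staying inside the horizontal extent of $Q$, forcing an intersection. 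You use the topology only qualitatively (to exclude near-perpendicular tangents and to fix the sign of $\psi=g-f$) and extract the exponent from calculus: $L\abs{\tan\theta}\le\abs{b}+AL^{1+\gamma}+A(d+L)^{1+\gamma}$ with $L\sim d^{1/(1+\gamma)}$. Both arguments live at the same critical scale; yours makes the origin of the exponent $\frac{\gamma}{1+\gamma}$ more transparent and avoids the $\sin$-to-$\tan$ reduction, while the paper's handles all angles in one stroke, so your preliminary reduction to $\bar\theta<\frac{\pi}{3}$ --- which you rightly identify as the delicate crossing step --- is essentially the paper's entire proof run once in an easy regime. You do not escape the topology; you just quarantine it.

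The gap: your dichotomy ``$z(\xi)$ lies on $\Gamma_\eta$'' versus ``$z(\xi)$ lies on a sub-arc disjoint from $\Gamma_\eta$'' is not exhaustive. The two parameter intervals produced by Lemma~\ref{L2.1} can overlap even when $\xi$ is not in the interval defining $\Gamma_\eta$: the curve can exit the graph window of $f$ and return near the origin while remaining parameter-connected to $\Gamma_\eta$ at scale $R_0$. Then $\Gamma_\xi\cap\Gamma_\eta\neq\emptyset$, $\psi$ vanishes somewhere, and the constant-sign step --- the only place simplicity of the curve enters your main estimate --- fails as stated. The repair is to build the graphs at the scale $L\sim d^{1/(1+\gamma)}$ rather than $R_0$: if the two parameter intervals at that scale meet, then $\abs{\xi-\eta}\lesssim L$ and the H\"older continuity of $\partial_\xi z$ together with \eqref{2.1a} gives $\abs{\tan\theta}\lesssim\abs{\xi-\eta}^{\gamma}\lesssim d^{\gamma/(1+\gamma)}$ directly (this also subsumes your Case 1, since $\abs{z(\xi)}=d$ forces $\xi$ into the scale-$L$ interval whenever it is in the scale-$R_0$ one); if they do not meet, the arcs are genuinely disjoint point sets and your comparison applies verbatim. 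With that repair, and with the $\bar\theta<\frac{\pi}{3}$ crossing argument written out, the proof closes. Two cosmetic points: the $C^{1,\gamma}$ constant of $g$ as a graph over the \emph{horizontal} axis is some $A'(A,\gamma)$, not $A$ itself, since regraphing over a rotated axis changes the constant; and one must check that the domain of $g$ as such a graph contains $[a-L,a+L]$, which requires shrinking the window in Lemma~\ref{L2.1} so that the horizontal coordinate is monotone along $\Gamma_\xi$.
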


\begin{proof}
Let $A,R_0$ be from Lemma \ref{L2.1}. First note that it suffices to prove 
    \begin{equation}\lb{2.5b}
        \abs{\sin\theta} \leq B\abs{z(\xi)-z(\eta)}^{\frac{\gamma}{1+\gamma}}
    \end{equation}
    instead of \eqref{2.5a}.  Indeed, we then only need to replace $R$ by $\min\{R, (2B)^{-\frac{1+\gamma}{\gamma}}\}$ (which yields $\abs{\cos\theta}\geq\frac{1}{2}$) and then double $B$.
        
Take  $C:= 9A$, and let $R:=\min\{\frac 12C^{-\frac{2+2\gamma}{\gamma}},(\frac{R_0}{3C})^{2}\}$ and  $B:=3C^2$.  Without loss assume that $z(\eta)=0$ and $\frac{\partial_{\xi}z(\eta)}{\abs{\partial_{\xi}z(\eta)}}=(1,0)$, and then let $r:=\abs{z(\xi)}\le R$ and $r':=C r^{\frac{1}{1+\gamma}}\le CR^{\frac 12}\le\frac{R_0}3$.
Then Lemma \ref{L2.1} with $v:=(1,0)$  shows  that $z$ near the origin is a curve connecting the vertical sides of the rectangle $Q:=[-3r',3r']\times[-C^3r,C^3r]$ because
\[
A(3r')^\gamma (3r') \le 9AC^2r \le C^3r
\]
(note that the definition of $R$ shows that $C^3r< r'$, so the vertical sides are the shorter ones).

Apply the same argument with $v:=\frac{\partial_{\xi}z(\xi)}{\abs{\partial_{\xi}z(\xi)}}$ and the rectangle $Q'$ centered at $z(\xi)$ whose longer axis connects the points $z(\xi)\pm v r'$ and whose shorter sides have again length $2C^3r$.  It shows that $z$ near $z(\xi)$ is a curve connecting the shorter sides of $Q'$.  If \eqref{2.5b} is violated, then one of these sides lies fully in $(-3r',3r')\times(C^3r,\infty)$ and the other in $(-3r',3r')\times(-\infty,-C^3r)$ (see Figure \ref{F.2.9}) because $r+r'+C^3r<3r'$ and
\[
r'\sin\theta > BCr \ge 3C^3r > 2C^3r+r.
\]  
But this means that the two curves must intersect, a contradiction with our assumption that no touch has occurred before time $T$.
\end{proof}

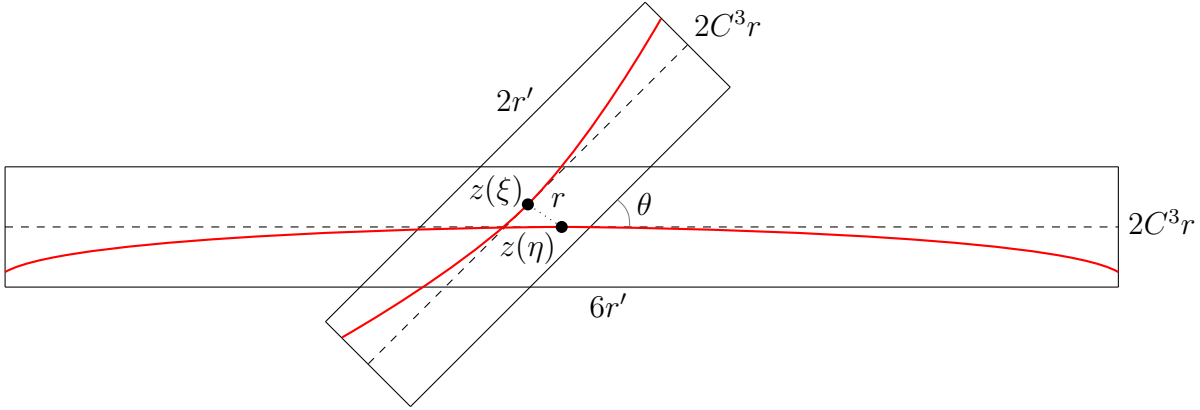
\begin{figure} 
    \pgfdeclarelayer{drawing layer}
    \pgfdeclarelayer{label layer}
    \pgfsetlayers{drawing layer,label layer}
    \begin{tikzpicture}
        \node (eta) at (0, 0) {};
        \node (eta_tangent_l) at (-7.4, 0) {};
        \node (eta_tangent_r) at (7.4, 0) {};
        \node (eta_curve_l) at (-7.4, -0.6) {};
        \node (eta_curve_r) at (7.4, -0.6) {};
        \node (eta_rect_tr) at (7.4, 0.8) {};
        \node (eta_rect_tl) at (-7.4, 0.8) {};
        \node (eta_rect_bl) at (-7.4, -0.8) {};
        \node (eta_rect_br) at (7.4, -0.8) {};
        \node (xi) at (-0.45, 0.3) {};
        \node (xi_tangent_r) at (1.67, 2.42) {};
        \node (xi_tangent_l) at (-2.57, -1.82) {};
        \node (xi_curve_l) at (-2.92, -1.47) {};
        \node (xi_curve_r) at (1.32, 2.77) {};
        \node (xi_rect_tr) at (1.11, 2.99) {};
        \node (xi_rect_tl) at (-3.14, -1.26) {};
        \node (xi_rect_bl) at (-2.01, -2.39) {};
        \node (xi_rect_br) at (2.24, 1.86) {};
        \node (theta_from) at (0.9, 0) {};
        \node (theta_to) at (0.74, 0.36) {};

        \begin{pgfonlayer}{drawing layer}
            \draw (eta_rect_tr.center) to (eta_rect_tl.center);
            \draw (eta_rect_tl.center) to (eta_rect_bl.center);
            \draw (eta_rect_bl.center) to (eta_rect_br.center);
            \draw (eta_rect_br.center) to (eta_rect_tr.center);
            \draw (xi_rect_tr.center) to (xi_rect_tl.center);
            \draw (xi_rect_tl.center) to (xi_rect_bl.center);
            \draw (xi_rect_bl.center) to (xi_rect_br.center);
            \draw (xi_rect_br.center) to (xi_rect_tr.center);
            \draw[dashed] (eta_tangent_l.center) to (eta_tangent_r.center);
            \draw[dashed] (xi_tangent_l.center) to (xi_tangent_r.center);
            \draw[dotted] (eta.center) to (xi.center);
            \draw [red, thick, in=30, out=180, looseness=0.35] (eta.center) to (eta_curve_l.center);
            \draw [red, thick, in=150, out=0, looseness=0.35] (eta.center) to (eta_curve_r.center);
            \draw [red, thick, in=30, out=225, looseness=0.75] (xi.center) to (xi_curve_l.center);
            \draw [red, thick, in=-120, out=45, looseness=0.75] (xi.center) to (xi_curve_r.center);
            \draw [thin, gray, bend right, looseness=1] (theta_from.center) to (theta_to.center);

            \fill (eta) circle(0.08);
            \fill (xi) circle(0.08);
        \end{pgfonlayer}

        \begin{pgfonlayer}{label layer}
            \node (eta_label) at (-0.45, -0.28) {$z(\eta)$};
            \node (xi_label) at (-0.87, 0.48) {$z(\xi)$};
            \node (r_label) at (-0.04, 0.35) {$r$};
            \node (theta_label) at (1.1, 0.3) {$\theta$};
            \node (eta_hori_label) at (0.62, -1.04) {$6r'$};
            \node (eta_vert_label) at (7.98, 0.07) {$2C^{3}r$};
            \node (xi_vert_label) at (2.21, 2.71) {$2C^{3}r$};
            \node (xi_hori_label) at (-0.62, 1.7) {$2r'$};
        \end{pgfonlayer}
    \end{tikzpicture}
    \caption{The curve $z$ inside the rectangles $Q$ and $Q'$.} \lb{F.2.9}
\end{figure}

We can now combine Lemmas~\ref{L2.1} and \ref{L2.2} to obtain the following constraint on the geometry of $\partial\Omega$ near the origin.

\begin{lemma}\lb{L2.3}
In the setting of (1) and (2), there are $A,B,R>0$ with $B(3R)^{\frac{\gamma}{1+\gamma}} \le\frac 12$ and $M(4R)^\gamma\le (M')^{1+\gamma}$ such that for any $\xi\in\mathbb{T}$ with $z(\xi)\in[-R,R]\times[-2R,2R]$, there is  $f\colon[-R,R]\to\mathbb{R}$ with
    \[
        \norm{f}_{C^{1,\gamma}}\leq A
        \quad\textrm{and}\quad
        \abs{f'(z_{1}(\xi))} \leq B\abs{z(\xi)}^{\frac{\gamma}{1+\gamma}}
    \]
    such that the graph of $f$ is a segment of the curve $z$ around $z(\xi)$. In particular,
    \[
        \abs{f(h)- z_{2}(\xi) - f'(z_{1}(\xi))(h - z_{1}(\xi))}
        \leq A\abs{h - z_{1}(\xi)}^{1+\gamma}
    \]
for all $h\in[-R,R]$. And if $\abs{f(h')}> 2R$ for some $h'\in[-R,R]$, then $\abs{f(h)} > R$ for all $h\in[-R,R]$.
\end{lemma}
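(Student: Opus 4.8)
The plan is to represent the curve near any admissible $z(\xi)$ as the graph of a function over the horizontal axis (the tangent direction at $z(\eta_t)$), invoking Lemma \ref{L2.1}, after first using Lemma \ref{L2.2} to certify that the tangent at $z(\xi)$ is nearly horizontal. First I would fix the constants: let $A,R_0$ be from Lemma \ref{L2.1} and let $B,R'$ be the constants called $B,R$ in Lemma \ref{L2.2}. I then choose $R>0$ small enough that $2R\le R_0$, that $3R\le R'$, and that $B(3R)^{\frac{\gamma}{1+\gamma}}\le\frac12$ and $M(4R)^\gamma\le(M')^{1+\gamma}$ hold; since each condition is an upper bound on $R$, they are mutually compatible. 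The point of $3R\le R'$ is that every $z(\xi)$ in the box $[-R,R]\times[-2R,2R]$ satisfies $\abs{z(\xi)}\le\sqrt5\,R<3R\le R'$, so Lemma \ref{L2.2} is available with the reference point $\eta=\eta_t$; recall that $z(\eta_t)=(0,0)$ by (1) and $\partial_\xi z(\eta_t)$ is horizontal by (2).

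Fix such a $\xi$ and let $\theta_\xi$ be the angle between $\partial_\xi z(\xi)$ and $\partial_\xi z(\eta_t)$. Lemma \ref{L2.2} gives $\abs{\tan\theta_\xi}\le B\abs{z(\xi)}^{\frac{\gamma}{1+\gamma}}\le\frac12$, whence $\abs{\partial_\xi z(\xi)\cdot(1,0)}\ge\frac12\abs{\partial_\xi z(\xi)}$ (as $\abs{\tan}\le\frac12$ forces $\abs{\cos}\ge\frac12$). Thus Lemma \ref{L2.1} applies with $v=(1,0)$, producing $g$ on $[-R_0,R_0]$ whose graph over the horizontal line through $z(\xi)$ is a segment of the curve. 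Since $2R\le R_0$ and $z_1(\xi)\in[-R,R]$, I may set $f(s):=z_2(\xi)+g(s-z_1(\xi))$ for $s\in[-R,R]$, so that $f(z_1(\xi))=z_2(\xi)$, $f'(z_1(\xi))=\tan\theta_\xi$ (giving the slope bound $\abs{f'(z_1(\xi))}\le B\abs{z(\xi)}^{\frac{\gamma}{1+\gamma}}$), and $\norm{f}_{C^{1,\gamma}}\le A$ is inherited from Lemma \ref{L2.1} (the vertical shift alters neither $f'$ nor $[f']_{C^{0,\gamma}}$). The ``in particular'' Taylor estimate then follows at once by integrating $\abs{f'(s)-f'(z_1(\xi))}\le[f']_{C^{0,\gamma}}\abs{s-z_1(\xi)}^\gamma\le A\abs{s-z_1(\xi)}^\gamma$ from $z_1(\xi)$ to $h$.

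The main obstacle is the final ``clean exit'' claim, because a priori the slope of $f$ is controlled only at $z(\xi)$, not along the whole graph. The key observation is that the slope is controlled at \emph{every} point of the graph that still lies in the box: if $h\in[-R,R]$ and $\abs{f(h)}\le 2R$, then $(h,f(h))$ is a point of the curve with $\abs{(h,f(h))}\le\sqrt5\,R<3R\le R'$, so Lemma \ref{L2.2} with reference point $\eta_t$ yields $\abs{f'(h)}\le B(\sqrt5\,R)^{\frac{\gamma}{1+\gamma}}=:c$, and crucially $c<\frac12$ because $\sqrt5<3$. Now suppose $\abs{f(h')}>2R$ for some $h'$ while $\abs{f(h_0)}\le R$ for some $h_0$; taking without loss $f(h')>2R$, the intermediate value theorem extracts $h_a,h_b\in[-R,R]$ with $f(h_a)=R$, $f(h_b)=2R$ and $R\le f\le 2R$ between them, so the bound $c$ holds throughout that subinterval and $R=\abs{f(h_b)-f(h_a)}\le c\abs{h_b-h_a}\le 2cR<R$, a contradiction. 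Hence $\abs{f(h)}>R$ for all $h\in[-R,R]$.

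The two delicate points I expect to need care are thus that the slope bound must be \emph{propagated} along the in-box portion of the arc via Lemma \ref{L2.2}, rather than deduced from the $C^{1,\gamma}$ bound alone (which would only give $\abs{f'}\le\frac12+A(2R)^\gamma$ and fail to close the argument), and that the strict inequality $c<\frac12$ — coming from the box being twice as tall as wide, together with $\sqrt5<3$ — is precisely what turns the marginal estimate $R\le 2cR$ into a genuine contradiction.
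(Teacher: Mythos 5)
Your proposal is correct and takes essentially the same route as the paper: choose $R$ as a minimum of the same smallness conditions, apply Lemma \ref{L2.2} with reference point $\eta_t$ to get the near-horizontal tangent, apply Lemma \ref{L2.1} with $v=(1,0)$ and add $z_2(\xi)$, and deduce the last claim from the slope bound $B(3R)^{\frac{\gamma}{1+\gamma}}\le\frac12$. Your elaboration of the ``clean exit'' step --- propagating the slope bound via Lemma \ref{L2.2} to every in-box point of the graph rather than relying on the $C^{1,\gamma}$ seminorm, and noting the strict inequality coming from $\sqrt5<3$ --- is precisely the content compressed into the paper's one-line justification of the third statement.
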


\begin{proof}
The first statement is an immediate consequence of Lemmas~\ref{L2.1} and \ref{L2.2}, with $A$ from Lemma \ref{L2.1}, and $B$ from Lemma~\ref{L2.2}, and $R$ being the minimum of one third of $R$ from Lemma~\ref{L2.2} and $\min\{ \frac 13 (2B)^{-\frac{1+\gamma}{\gamma}}, \frac 14 (M')^{\frac{1+\gamma}{\gamma}} M^{-\frac{1}{\gamma}}\}$ (Lemma \ref{L2.1} is applied with $v\coloneqq(1,0)$ and $z_2(\xi)$ is added to the obtained $f$).  The second statement is its immediate consequence, while the third holds by $B(3R)^{\frac{\gamma}{1+\gamma}} \le\frac 12$.
\end{proof}

The third claim shows that any connected component of $\partial\Omega\cap ([-R,R]\times[-2R,2R])$ that intersects $[-R,R]^2$ is a graph of a function $f:[-R,R]\to [-2R,2R]$ that satisfies the lemma.
Note also that since the arc-length of any such component is at least $2R$, and the arc-length of $\partial\Omega$ is uniformly bounded above because so is $\|\partial\Omega\|_{C^{1}}$, it follows that the number of such components is bounded above by some constant  $K$.  That is, if we assume \eqref{2.0}, only a finite number of folds of $\partial\Omega$ might potentially create a single touch (splash) at time $T$ (we will show below that this is in fact not possible when $\alpha\le \frac 14$).

\subsection{Reduction to Regions Near Individual Boundary Segments} \lb{S2.3}

Take $A,B,R$ from Lemma~\ref{L2.3} and $K$ above.
From \eqref{1.4} we see that the left-hand side of \eqref{2.5} is the sum of the terms
\begin{align*}
    I &\coloneqq \int_{\Omega\cap [-R,R]^2} \left( \frac{y_{1}}{\abs{y}^{2+2\alpha}}
    - \frac{y_{1}}{\abs{y - (0,m)}^{2+2\alpha}} \right) dy, \\
    I' &\coloneqq \int_{\Omega\setminus [-R,R]^2} \left( \frac{y_{1}}{\abs{y}^{2+2\alpha}}
    - \frac{y_{1}}{\abs{y - (0,m)}^{2+2\alpha}} \right) dy.
\end{align*}
To prove \eqref{2.5}, it clearly suffices to assume that $m\leq\frac{R}{2}$, in which case clearly $|I'|\le Cm$ for some constant $C$.  Hence we only need to show that $I\le Cm$.

Assume that $f_{1}, \dots ,f_{k}\colon[-R,R]\to [-2R,2R]$ are distinct functions whose graphs are all the connected components of $\partial\Omega$ from the paragraph after Lemma \ref{L2.3}  (so $k\le K$), and order them so that $f_{1}(0)<\dots<f_{k}(0)$. Let $g_{i}\coloneqq \sgn(f_i) \min\{|f_{i}|,R\}$, so that
\begin{equation}\lb{2.6}
    I \le \sum_{i=1}^{k+1}
   \left| \int_{-R}^{R}\int_{g_{i-1}(h)}^{g_{i}(h)}
    \left( \frac{h}{(h^{2}+v^{2})^{1+\alpha}} - \frac{h}{(h^{2} + (v-m)^{2})^{1+\alpha}} \right) dv\,dh \right|,
\end{equation}
where $g_{0}\equiv -R$ and $g_{k+1}\equiv R$.  Since the integrand is odd in $h$,
its integral on any region symmetric with respect to the vertical axis is zero. 
Since  $[-R,R]\times [g_{i-1}(0),g_{i}(0)]$ is such a region we can replace the integral $\int_{g_{i-1}(h)}^{g_{i}(h)}$ in \eqref{2.6} by the sum of integrals $\int_{g_{i-1}(h)}^{g_{i-1}(0)}$ and $\int_{g_{i}(0)}^{g_{i}(h)}$ (with the same integrand).
We therefore obtain $|I|\le 2 \sum_{i=1}^{k} |I_{i}|$, where
\[
    I_{i} \coloneqq \int_{-R}^{R}\int_{g_{i}(0)}^{g_{i}(h)}
   \left( \frac{h}{(h^{2}+v^{2})^{1+\alpha}} - \frac{h}{(h^{2} + (v-m)^{2})^{1+\alpha}}\,dv \right) dh.
\]
Hence, we are left with showing $|I_{i}|\le C m$ for each $i$. When doing this, we can just assume that $g_{i}=f_{i}$ because the error we incur by this involves only integration over $\Omega\setminus [-R,R]^2$, and therefore is no more than
$Cm$ (similarly to $I'$).

\subsection{Estimating the Individual Integrals} \lb{S2.4}

We thus consider any $f\colon[-R,R]\to [-2R,2R]$ whose graph is a segment of the curve $z$ passing through a point in $[-R,R]^2$, let
\beq \lb{2.7a}
    J \coloneqq \int_{-R}^{R}\int_{f(0)}^{f(h)}
    \left( \frac{h}{(h^{2}+v^{2})^{1+\alpha}} - \frac{h}{(h^{2} + (v-m)^{2})^{1+\alpha}} \right) dv\,dh,
\eeq
and need to show that $\abs{J}\le Cm$. We further divide this integral into two pieces:
\begin{align*}
    J_{1} & \coloneqq \int_{-R}^{R}\int_{f(0)}^{f(0) + hf'(0)}
    \left( \frac{h}{(h^{2}+v^{2})^{1+\alpha}} - \frac{h}{(h^{2} + (v-m)^{2})^{1+\alpha}} \right) dv\,dh, \\
    J_{2} & \coloneqq \int_{-R}^{R}\int_{f(0) + hf'(0)}^{f(h)}
    \left( \frac{h}{(h^{2}+v^{2})^{1+\alpha}} - \frac{h}{(h^{2} + (v-m)^{2})^{1+\alpha}} \right) dv\,dh,
\end{align*}
and estimate $J_{2}$ first.

\begin{lemma} \lb{L2.4}
    We have $\abs{J_{2}}\le Cm$ when $\gamma>2\alpha$ and $\abs{J_{2}}\le Cm(1+\ln_- m)$ when  $\gamma=2\alpha$, for some constant $C$.
\end{lemma}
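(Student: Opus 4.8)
The plan is to exploit the fact that in $J_2$ the inner $v$-integration runs only between the tangent line $v=f(0)+hf'(0)$ and the curve $v=f(h)$, so by the Taylor-type estimate in Lemma~\ref{L2.3} (which follows directly from $\norm{f}_{C^{1,\gamma}}\le A$) its length is at most $A\abs{h}^{1+\gamma}$. Writing the integrand as $G(h,v):=h\,[\phi_h(v)-\phi_h(v-m)]$ with $\phi_h(s):=(h^2+s^2)^{-1-\alpha}$, I would reduce the whole estimate to a pointwise bound on the kernel via
\[
    \abs{J_2}\le A\int_{-R}^{R}\abs{h}^{1+\gamma}\,\sup_{v\in\bbR}\abs{G(h,v)}\,dh.
\]

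For the kernel I would record two complementary bounds, both uniform in $v$. The trivial one, from $h^2+s^2\ge h^2$, is $\abs{G(h,v)}\le 2\abs{h}^{-1-2\alpha}$. The second extracts the crucial factor $m$: by the mean value theorem $\phi_h(v)-\phi_h(v-m)=m\,\phi_h'(s^*)$ for some $s^*$ between $v-m$ and $v$, and since $\phi_h'(s)=-2(1+\alpha)s(h^2+s^2)^{-2-\alpha}$ and $\abs{h s^*}\le\tfrac12(h^2+(s^*)^2)$, this yields $\abs{G(h,v)}\le Cm\,(h^2+(s^*)^2)^{-1-\alpha}\le Cm\,\abs{h}^{-2-2\alpha}$, again using $h^2+(s^*)^2\ge h^2$. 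The key observation is that the floor $h^2$ makes both bounds independent of how close $v$ is to the singular heights $0$ and $m$, so no delicate analysis of the position of the integration sliver relative to the singularities is required; in particular the refined slope bound $\abs{f'(0)}\le B\abs{f(0)}^{\gamma/(1+\gamma)}$ from Lemma~\ref{L2.3} is \emph{not} needed here (only the width estimate is), in contrast to the tangent-line term $J_1$.

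I would then insert these into the displayed integral. For $\gamma>2\alpha$ the difference bound alone suffices: $\abs{J_2}\le Cm\int_{-R}^{R}\abs{h}^{\gamma-1-2\alpha}\,dh$, and since $\gamma-1-2\alpha>-1$ this integral converges, giving $\abs{J_2}\le Cm$. For the borderline case $\gamma=2\alpha$ the same integral is logarithmically divergent at $h=0$, so I would split at the scale $\abs{h}=m$, where the two kernel bounds cross over (their ratio is $m/\abs{h}$): on $\abs{h}<m$ the trivial bound gives $\int_{\abs{h}<m}\abs{h}^{\gamma-2\alpha}\,dh=2m$, while on $m\le\abs{h}\le R$ the difference bound gives $Cm\int_m^R\abs{h}^{-1}\,dh=Cm\ln(R/m)$, and together these yield $\abs{J_2}\le Cm(1+\ln_- m)$.

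The computation is short, and I do not expect a serious obstacle in this lemma. The one place requiring a little care is the borderline $\gamma=2\alpha$, where the logarithm is genuine and one must balance the two kernel bounds at exactly $\abs{h}\sim m$ to see that the small-scale region contributes only $O(m)$ while the rest produces the $m\ln_- m$ term. I would emphasize that the factor $m$ is generated entirely by the mean-value (difference) structure of $G$, reflecting that the two singular points $(0,0)$ and $(0,m)$ lie at distance $m$, and that the genuinely hard estimate is the companion bound on $J_1$ (Lemma~\ref{L2.5}), not the present one.
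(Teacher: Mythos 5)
Your proposal is correct and follows essentially the same route as the paper: bound the width of the integration sliver by $2A\abs{h}^{1+\gamma}$ via Lemma~\ref{L2.3}, pair a crude kernel bound of order $\abs{h}^{-1-2\alpha}$ near $h=0$ with a mean-value bound of order $m\abs{h}^{-2-2\alpha}$ away from it, and split at $\abs{h}\sim m$ to get $Cm$ for $\gamma>2\alpha$ and $Cm(1+\ln_- m)$ at the borderline $\gamma=2\alpha$. The only cosmetic differences are that the paper derives its small-$h$ bound by combining the mean-value estimate with $V\ge m/2$ rather than directly from $h^2+s^2\ge h^2$, and splits at $m/2$ instead of $m$.
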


\begin{proof}
    By Lemma~\ref{L2.3}, we have
    \begin{align*}
        \abs{J_{2}} &\leq \int_{-R}^{R}
        \int_{f(0)+hf'(0) -A\abs{h}^{1+\gamma}}^{f(0)+hf'(0)+A\abs{h}^{1+\gamma}}
        \abs{ \frac{h}{(h^{2}+v^{2})^{1+\alpha}} - \frac{h}{(h^{2} + (v-m)^{2})^{1+\alpha}} }
        \,dv\,dh.
    \end{align*}
    Mean value theorem yields
    \begin{equation*}
        \abs{ \frac{h}{(h^{2}+v^{2})^{1+\alpha}} - \frac{h}{(h^{2} + (v-m)^{2})^{1+\alpha}} }
        = \frac{(2+2\alpha) \abs{h} (h^{2}+\bar{v}^{2})^{\alpha} \abs{\bar{v}} }
        {(h^{2}+v^{2})^{1+\alpha}(h^{2}+(v-m)^{2})^{1+\alpha}} \, m
    \end{equation*}
    for some $\bar{v}\in[v-m,v]$. Hence
    \[
        \abs{ \frac{h}{(h^{2}+v^{2})^{1+\alpha}} - \frac{h}{(h^{2} + (v-m)^{2})^{1+\alpha}} }
       \le \frac{ 3m }{\abs{h}^{2+2\alpha}},
    \]
    and if $V:=\max\{\abs{v},\abs{v-m})\} \ge \abs{h}$, then
    we also have
    \[
        \abs{ \frac{h}{(h^{2}+v^{2})^{1+\alpha}} - \frac{h}{(h^{2} + (v-m)^{2})^{1+\alpha}} }
        \le \frac{3m\abs{h}2^\alpha V^{1+2\alpha}}
        {\abs{h}^{2+2\alpha} V^{2+2\alpha}} 
        \le \frac{6m}{\abs{h}^{1+2\alpha} V}.
    \]
    Since $V\ge\frac m2$ and we assume that $m\le\frac R2$ (see the start of Subsection \ref{S2.3}), we obtain 
\[
        \abs{J_{2}} 
        \le  2\int_{0}^{m/2}\frac{ 12}{\abs{h}^{1+2\alpha}} 2A\abs{h}^{1+\gamma}\,dh  
        + 2\int_{m/2}^{R}\frac{ 6m}{\abs{h}^{2+2\alpha}} 2A\abs{h}^{1+\gamma} \,dh. 
 \]
This is less than $Cm$ if $\gamma>2\alpha$ and less than $Cm(1+\ln_- m)$ if  $\gamma=2\alpha$ (for some $C$).
\end{proof}

To estimate $J_{1}$, it suffices to assume that $f(0)\notin[0,m]$.  Indeed, if $f(0)\in\{0,m\}$, then the graph of $f$ contains either $(0,0)$ or $(0,m)$, so (1) and (2) above \eqref{2.5} imply $f'(0)=0$ and therefore $J_{1}=0$.  And if $f(0)\in (0,m)$, then the definition of $m$ shows that there must be $\eta\in\bbT$ with $|\eta-\eta_t|<\delta$ such that $z(\eta)=(0,f(0))$. In that case \eqref{2.2a} yields $|\eta-\eta_t|\le\frac{4R}{M'}$ and hence for all $\xi$ between $\eta$ and $\eta_t$ we  have
\[
\left| \partial_\xi z(\xi) - \partial_\xi z(\eta_t) \right| \le M \left( \frac{4R}{M'} \right)^\gamma \le M' \le \left| \partial_\xi z(\eta_t)\right|
\]
by Lemma \ref{L2.3}.  This shows that $\partial_\xi z(\xi)\cdot \partial_\xi z(\eta_t)\ge 0$ for all these $\xi$, which clearly contradicts $(z(\eta)-z(\eta_t)) \cdot  \partial_\xi z(\eta_t)=0$.  So $f(0)\notin[0,m]$, and we define $a:=-f(0)>0$ when $f(0)<0$, and $a:=f(0)-m>0$ when $f(0)>m$. 
In both cases Lemma \ref{L2.3} yields 
\beq\lb{2.10}
\abs{f'(0)} \le B a^{\frac{\gamma}{1+\gamma}} \leq B R^{\frac{\gamma}{1+\gamma}}\le \frac 12.
\eeq

\begin{lemma} \lb{L2.5}
    We have $\abs{J_{1}}\le C a^{\frac{\gamma}{1+\gamma}}(a+m)^{-2\alpha}m$ for some constant $C$.
\end{lemma}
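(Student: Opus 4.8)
The plan is to bound the inner integral in $v$ by its length times the supremum of the integrand, and then to integrate the resulting one‑variable expression in $h$. The substitution $v\mapsto m-v$ negates the integrand and sends the case $f(0)=m+a$ to the case $f(0)=-a$ while preserving $|f'(0)|$ and $a$, so I may assume $f(0)=-a$. Then $v$ runs over an interval of length $|hf'(0)|\le\frac12|h|$ with one endpoint at $-a$, and $|f'(0)|\le Ba^{\frac{\gamma}{1+\gamma}}$ by \eqref{2.10}.

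First I would record the geometry of this region. Since $|v+a|\le\frac12|h|$ throughout, a short case analysis comparing $|h|$ with $a$ and with $a+m$ shows that $h^2+v^2$ is comparable to $h^2+a^2$ and $h^2+(v-m)^2$ is comparable to $h^2+(a+m)^2$, with absolute implied constants. Writing the integrand over a common denominator and applying the mean value theorem to the numerator $(h^2+(v-m)^2)^{1+\alpha}-(h^2+v^2)^{1+\alpha}$, exactly as in the proof of Lemma~\ref{L2.4}, its absolute value equals
\[
    \frac{(2+2\alpha)\,|h|\,|\bar v|\,(h^2+\bar v^2)^{\alpha}}{(h^2+v^2)^{1+\alpha}\,(h^2+(v-m)^2)^{1+\alpha}}\,m
\]
for some $\bar v\in[v-m,v]$. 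The key is to keep the two denominator factors separate: bounding $|\bar v|\,(h^2+\bar v^2)^{\alpha}\le(h^2+\bar v^2)^{\frac12+\alpha}\le C(h^2+(a+m)^2)^{\frac12+\alpha}$ and using the two comparabilities, the integrand is at most
\[
    \frac{C|h|}{(h^2+a^2)^{1+\alpha}\,(h^2+(a+m)^2)^{1/2}}\,m .
\]

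Multiplying this by the length $|hf'(0)|\le Ba^{\frac{\gamma}{1+\gamma}}|h|$ of the $v$-interval and integrating in $h$ reduces the claim to the one‑dimensional estimate
\[
    \int_{-R}^{R}\frac{h^2}{(h^2+a^2)^{1+\alpha}\,(h^2+(a+m)^2)^{1/2}}\,dh\le C(a+m)^{-2\alpha}.
\]
To prove it I would substitute $h=(a+m)s$ and set $\rho:=\frac{a}{a+m}\in(0,1]$, which turns the left-hand side into a constant times $(a+m)^{-2\alpha}\int_0^{R/(a+m)}\frac{s^2}{(s^2+\rho^2)^{1+\alpha}(s^2+1)^{1/2}}\,ds$, an integral I would bound uniformly in $\rho$ by $\int_0^\infty\frac{s^2}{(s^2+\rho^2)^{1+\alpha}(s^2+1)^{1/2}}\,ds$. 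The tail $s\ge1$ is $O(1)$ for every $\alpha>0$, while the part $s\le1$ equals, up to a constant and the further scaling $s=\rho\sigma$, the quantity $\rho^{1-2\alpha}\int_0^{1/\rho}\frac{\sigma^2}{(\sigma^2+1)^{1+\alpha}}\,d\sigma$; this is $O(1)$ precisely when $2\alpha<1$, which is where the restriction $\alpha<\frac12$ enters (for $\alpha\ge\frac12$ a negative power of $\rho$ or a logarithm survives). Combining the two displays gives $|J_1|\le Ca^{\frac{\gamma}{1+\gamma}}(a+m)^{-2\alpha}m$.

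I expect the delicate step to be the pointwise bound on the integrand, where the correct power $(a+m)^{-2\alpha}$ is produced. Replacing the factor $(h^2+(a+m)^2)^{1/2}$ there by $(h^2+a^2)^{1/2}$ — that is, ignoring that the second singularity sits at height $a+m$ rather than $a$ — would destroy this decay and yield a bound that is far too large when $a\ll m$, so tracking the two denominator factors separately is essential, as is the uniform-in-$\rho$ convergence above.
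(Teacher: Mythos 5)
Your argument is correct and follows essentially the same route as the paper's: the mean value theorem applied to the kernel difference, retaining a half-power of the distance to the far singularity at height $m$ so that the factor $(a+m)^{-2\alpha}$ survives, multiplied by the interval length $\abs{hf'(0)}\le Ba^{\frac{\gamma}{1+\gamma}}\abs{h}$ from \eqref{2.10}, with $\alpha<\tfrac12$ entering through the integrability of $\abs{h}^{-2\alpha}$ at the origin. The only cosmetic difference is that the paper splits the $h$-integral at $\abs{h}=a+m$ and uses cruder pointwise bounds on each piece, whereas you use a single unified pointwise bound followed by the rescaling $h=(a+m)s$; both yield the same conclusion with the uniformity in $\rho=a/(a+m)\in(0,1]$ that is needed for the half-plane case.
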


\begin{proof}
    Note that the definition of $m$ shows that $a\ge m$, so we could replace $a+m$ by $a$.  We will not use this so that this result also applies in Section \ref{S3}. We will assume $f(0)<0$ since the proof for the other case is virtually identical. We can then rewrite $J_{1}$ as
    \[
        J_{1} = \int_{-R}^{R}\int_{0}^{hf'(0)}
        \left( \frac{h}{(h^{2}+(v-a)^{2})^{1+\alpha}} - \frac{h}{(h^{2}+(v-a-m)^{2})^{1+\alpha}}
        \right) dv\,dh.
    \]
    We split the integral into two parts:
    \begin{align*}
        J_{3} &\coloneqq \int_{\abs{h} < a+m}\int_{0}^{hf'(0)}
         \left(\frac{h}{(h^{2}+(v-a)^{2})^{1+\alpha}} - \frac{h}{(h^{2}+(v-a-m)^{2})^{1+\alpha}}
         \right) dv\,dh, \\
        J_{4} &\coloneqq \int_{a+m\le \abs{h}\le R}\int_{0}^{hf'(0)}
         \left(\frac{h}{(h^{2}+(v-a)^{2})^{1+\alpha}} - \frac{h}{(h^{2}+(v-a-m)^{2})^{1+\alpha}}
         \right) dv\,dh.
    \end{align*}
    
    For $J_{3}$, note that for any $v$ in the domain of integration, we have
    \[
       v-a-m \leq  \abs{(a+m)f'(0)} - a - m  \leq - \frac{a+m}{2}.
    \]
    This also shows that $v-a \leq \frac{m-a}{2}$, so $\abs{v-a}\leq\abs{v-a-m}$.  The mean value theorem then gives for some $\bar{v}\in[v-a-m,v-a]$,
    \begin{align*}
        \abs{ \frac{h}{(h^{2}+(v-a)^{2})^{1+\alpha}} - \frac{h}{(h^{2} + (v-a-m)^{2})^{1+\alpha}} }
       &  = \frac{(2+2\alpha) \abs{h} (h^{2}+\bar{v}^{2})^{\alpha} \abs{\bar{v}} }
        {(h^{2}+(v-a)^{2})^{1+\alpha}(h^{2}+(v-a-m)^{2})^{1+\alpha}} \, m
     \\   &\leq \frac{6m}{\abs{h}^{1+2\alpha}(a+m)}
    \end{align*}
    because $\max\{|\bar v|,\frac{a+m}2\}\le|v-a-m|$. 
    This and \eqref{2.10} yield
    \begin{align*}
        \abs{J_{3}} \le
        \int_{-a-m}^{a+m}   \frac{6m}{\abs{h}^{1+2\alpha}(a+m)} \,\abs{hf'(0)} \,dh
        \leq \frac{12m}{1-2\alpha} \frac{\abs{f'(0)}}{(a+m)^{2\alpha}} 
        \leq \frac{12B}{1-2\alpha}a^{\frac{\gamma}{1+\gamma}} \frac m{(a+m)^{2\alpha}}.
    \end{align*}
    
    As for $J_{4}$,
    the mean value theorem yields
    \begin{align*}
        \abs{ \frac{h}{(h^{2}+(v-a)^{2})^{1+\alpha}} - \frac{h}{(h^{2} + (v-a-m)^{2})^{1+\alpha}} }
        \leq \frac{3m}{\abs{h}^{2+2\alpha}}.
    \end{align*}
    From this and \eqref{2.10} we obtain
    \begin{align*}
        \abs{J_{4}} &\le
        2 \int_{a+m}^R  \frac{3m}{\abs{h}^{2+2\alpha}} \abs{hf'(0)} \,dh
        \le  \frac{3m }{\alpha } \frac{\abs{f'(0)}}{(a+m)^{2\alpha}}
        \le \frac{3B}{\alpha} a^{\frac{\gamma}{1+\gamma}} \frac m{(a+m)^{2\alpha}}.
    \end{align*}
    This finishes the proof.
\end{proof}

The last two lemmas, together with the estimate $|I'|\le Cm$ above and $k\le K$, show that \eqref{2.5} holds when $\gamma>2\alpha$ and $\frac{\gamma}{1+\gamma}\geq 2\alpha$.   Since $\gamma =\frac{2\alpha}{1-2\alpha}$ satisfies this (and \eqref{2.0} holds for it by the hypothesis), the proof of the single-patch case of Theorem \ref{T1.3} (and of Remark 1 after it) is finished.

\subsection{Absence of Simple Splashes for all $\alpha\in(0,1)$} \lb{S2.5}

Let us now assume that only simple splashes can happen for a $C^{1,\gamma}$ patch and $\alpha\in(0,\frac 12)$.  That is,  there is $R>0$ such that for all $t$ close enough to $T$ and any $\xi_t,\eta_t\in\bbT$ satisfying $|z(\xi_t,t)-z(\eta_t,t)|=m(t)$, there is no $\xi\in\bbT$ such that $\min\{|\xi-\xi_t|,|\xi-\eta_t|\}\ge\delta$ and also $|z(\xi,t) - z(\eta_t,t)|\le R$.  This essentially means that any potential splash only involves two folds of $\partial\Omega$, although this requirement is in fact weaker than that:  multiple folds are allowed but not near minimizers of \eqref{2.2b}.  
Then in Lemma \ref{L2.5} we have $f'(0)=0$ and so $J_1=0$.  Hence Lemma \ref{L2.4} shows that a simple splash cannot occur by time $T$ if $\sup_{t\in[0,T)}\norm{\Omega(t)}_{C^{1,\gamma}}<\infty$ for some $\gamma\in[2\alpha,1]$, and $m(t)$ can decrease at most exponentially when  $\gamma>2\alpha$ and at most double-exponentially when  $\gamma=2\alpha$.  



In fact, one can even extend this result to all $\alpha\in[\frac 12,1)$.
In this case one must replace $u$ in \eqref{1.3a} (which becomes infinite on $\partial\Omega(t)$) by its normal ``component''
\[
        u_n(x,t) \coloneqq {\rm p.v.}\  \int_{\Omega(t)} c_\alpha \frac{(x-y)^{\perp}\cdot n_{x,t}}{\abs{x-y}^{2+2\alpha}} \, dy \,\, n_{x,t}
\]
(which is finite),
with $n_{x,t}$ the unit outer normal vector to $\Omega(t)$ at $x\in\partial\Omega(t)$ (see also Remark~2 after Definition 1.2 in \cite{KisYaoZla}, or \cite{KisLuo}).  If we now assume $\sup_{t\in[0,T)}\norm{\Omega(t)}_{C^{2,\gamma}}<\infty$, one can use (2) above \eqref{2.5} to show that $|f(h)-f(0)- \frac{f''(0)}2 h^2|\le A\abs{h}^{2+\gamma}$ in \eqref{2.7a}.
Then oddness-in-$h$ of the integrand will yield the estimate
\[
        \abs{J_{2}} 
        \le  2\int_{0}^{m/2}\frac{ 16}{\abs{h}^{1+2\alpha}}  2A\abs{h}^{2+\gamma} \,dh  
        + 2\int_{m/2}^{R}\frac{ 8m}{\abs{h}^{2+2\alpha}} 2A\abs{h}^{2+\gamma}\,dh 
 \]
in the proof of Lemma \ref{L2.4} whenever $\gamma\in[2\alpha-1,1]$.  Hence no finite time simple splash can happen by time $T$ in this case either, and we again obtain an exponential resp.~double-exponential lower bound on $m(t)$  when  $\gamma>2\alpha-1$ resp.~$\gamma=2\alpha-1$
 (note also that for $\alpha=\frac 12$ it even  suffices to assume $\sup_{t\in[0,T)}\norm{\Omega(t)}_{C^{1,1}}<\infty$, with $2+\gamma$ replaced by 2 above and with a double-exponential lower bound on $m(t)$).

\subsection{The Multiple Patches Case} \lb{S2.6}

In the general multiple patches case, \eqref{2.1} becomes
\[
    \sup_{t\in[0,T)} \,\sup_{(n,\xi),(j,\eta)\in Z_N \times \bbT  \,\&\, (n,\xi)\neq(j,\eta)} \frac{|n-j| + \abs{\xi-\eta}}{\abs{z_n(\xi,t) - z_j(\eta,t)}} < \infty,
\]
where $Z_N:=\{1,\dots,N\}$ and $z_n(\cdot, t)$ is a constant-speed parametrization of $\partial\Omega_n(t)$.  We choose the same $\delta$ (with all $z_n$ included in the definitions of $M,M'$), and then
\beq\lb{4.1}
    m(t)\coloneqq \min_{(n,\xi),(j,\eta)\in Z_N \times \bbT \,\&\, \abs{(n,\xi)-(j,\eta)}\geq\delta}\abs{z_n(\xi,t) - z_j(\eta,t)} \ge 0.
\eeq
The points $\xi_t$ and $\eta_t$ may now be on the boundaries of distinct patches, but that does not change our analysis, which only deals with the individual patch segments in a small rectangle centered at $\eta_t$.  The geometric lemmas are unchanged; the estimates on integrals $I'$ and $I_i$ in Subsection \ref{S2.3} only change by the factor $|\theta_1|+\dots+|\theta_N|$, hence so does the rest of the argument.  This finishes the proof of Theorem \ref{T1.3} (and of Remark 1 after it) as stated.  The claim in Remark 2 after Theorem \ref{T1.3} also extends easily to this case.

\section{Proof of Theorem \ref{T1.4}}
\label{S3}

Let us now turn to the half-plane case $D\coloneqq\mathbb{R}\times\mathbb{R}^{+}$,  when the proof is essentially identical to Theorem~\ref{T1.3} (and the $H^3$ resp.~$H^2$ local well-posedness results from  \cite{KisYaoZla, GanPat} require $\alpha\in(0,\frac 1{24})$ resp.~$\alpha\in(0,\frac 1{6})$).

Let us first recall the definition of patch solutions in this setting from \cite{KisYaoZla}.
Equation \eqref{1.1} is unchanged, and $\Delta$ in \eqref{1.1a}  is the Dirichlet Laplacian on $D$.  If we assume that $\alpha\in\left(0,\frac{1}{2}\right)$, this means that for an appropriate constant $c_{\alpha}>0$ we have
\begin{equation}\lb{3.1}
    u(x,t) = c_{\alpha}\int_{D}\left( \frac{(x-y)^{\perp}}{\abs{x-y}^{2+2\alpha}}
    - \frac{(x-\bar{y})^{\perp}}{\abs{x-\bar{y}}^{2+2\alpha}} \right)
    \omega(y,t)\,dy
\end{equation}
for each $x\in \overline{D}$, where $\bar{y}\coloneqq(y_{1},-y_{2})$. Definition~\ref{D1.2} is as before, but with the patches $\Omega_{1}(t), \dots ,\Omega_{N}(t)$ now contained in $D$ instead of $\mathbb{R}^{2}$, and with $u$ from \eqref{3.1} instead of \eqref{1.2}.  

We define $M,M',\delta$ as before and $m(t)$ via \eqref{4.1}. We also consider the reflected patches $\overline{\Omega}_{n}(t)\coloneqq \{y\in\bbR^2\setminus D \,|\,\bar{y}\in\Omega_{n}(t) \}$, which allow us to write (after dropping $c_\alpha$ via rescaling)
\[
    u(x,t) = \sum_{n=1}^{N}\theta_{n}    \int_{\Omega_{n}(t)}  \frac{(x-y)^{\perp}}{\abs{x-y}^{2+2\alpha}} \,dy \, - \,
    \sum_{n=1}^{N}\theta_{n}    \int_{\overline\Omega_{n}(t)} \frac{(x-y)^{\perp}}{\abs{x-y}^{2+2\alpha}}   \,dy.
\]
Theorem~\ref{T1.4} will now follow once we show \eqref{2.4} with this $u$.  This is proved in the same way as on $\bbR^2$, but now the boundary segments defining functions $f_i$ in Subsection \ref{S2.3} can belong to both the original and the reflected patches.  Note that the distance of $\partial \Omega_{n}(t)$ and $\partial \overline{\Omega}_{n}(t)$ can  be less than $m(t)$ (even 0 because they can touch at $\partial D$, in which case their normal vectors coincide at any point of touch).  But they obviously cannot cross (this is why we did not assume that $a\ge m$ in Lemma \ref{L2.5}), which allows us to use the same estimates as in Section \ref{S2}, modulo a factor of 2 due to the number of patches now being doubled. 

\end{document}